\theoremstyle{break}
\newtheorem{theorem}{Theorem}[section]
\newtheorem{proposition}[theorem]{Proposition}
\newtheorem{lemma}[theorem]{Lemma}
\newtheorem{corollary}[theorem]{Corollary}
\newtheorem{remark}[theorem]{Remark}
\newtheorem{definition}[theorem]{Definition}
\newtheorem{problem}[theorem]{Problem}
\title[Combination of affine deformations]{Combination of affine deformations on a hyperbolic surface}
\author[Takayuki Masuda]{TAKAYUKI MASUDA}
\address[Masuda]{Department of Mathematics, Graduate School of Science, Osaka University,
Toyonaka, Osaka 560-0043, Japan}
\email{t-masuda@cr.math.sci.osaka-u.ac.jp}
\keywords{Lorentzian geometry, affine deformation, Margulis invariant, once-holed torus, Fenchel-Nielsen twist}
\begin{document}
\maketitle

\begin{abstract}
This paper is a continuation of the previous paper of the author\cite{m2016}. We show that an affine deformation space of a hyperbolic surface of type $(g,b)$ can be parametrized by Margulis invariants and affine twist parameters with a certain decomposition of the surface, which are associated with the Fenchel-Nielsen coordinates in Teichmuller theory.
W.Goldman and G.Margulis\cite{gm2000} introduced that a translation part of an affine deformation canonically corresponds to a tangent vector on the Teichmuller space. By this correspondence, we explicitly represent tangent vectors on the Teichmuller space from the perspective of Lorentzian geometry,
only when the tangent vectors correspond to Fenchel-Nielsen twists along separating geodesic curves on a hyperbolic surface.
\end{abstract}

\section{Introduction}
This paper is a continuation of the previous paper of the author\cite{m2016}.
Let $G \subset {\rm PSL}(2, {\mathbb R}) \cong {\rm SO}^o(2,1)$ be a finitely generated Fuchsian group. We suppose that  $G$ has only hyperbolic elements and a quotient hyperbolic surface ${\mathbb H}^2 / G$ has at least one hole. A {\it cocycle} ${\bf u}$ on $G$ is a map from $G$ to $(2+1)$-dimensional Lorentzian spacetime ${\mathbb R}_1^2$, which satisfies the cocycle condition. 
An affine deformation $\rho_{{\bf u}}$ of $G$ is a homomorphism from $G$ to ${\rm SO}^o(2,1) \ltimes {\mathbb R}_1^2$, defined by $g \mapsto (g, {\bf u}(g))$. 

Following fundamental works by \cite{dg1990}\cite{cdg2010}\cite{cdg2012}\cite{cdg2014}, 
we regard the first cohomology group ${\rm H}^1 (G, {\mathbb R}_1^2)$ as the affine deformation space of $G$. 
They classify all proper affine deformations of ${\rm H}^1(G, {\mathbb R}_1^2)$ for some Fuchsian groups $G$.
A {\it Margulis invariant} ${\rm Mar}_{{\bf u}}$ is, by definition, a map which sends each element of $G$ to the translation length in ${\mathbb R}_1^2$. 

The author's previous work is related to any hyperbolic surface $S_{0, b} (b \geq 4)$ without cusps. 
By fixing the pants-decomposition of $S_{0,b}$, we show that the affine deformation space ${\rm H}^1(G_{0,b}, {\mathbb R}_1^2)$ is parametrized by Margulis invariants (corresponding to the original boundary components and the dividing curves of the pants-decomposition) and the {\it affine twist parameters} (along the dividing curves). 
The aim of this paper is to discuss such kind of coordinates for arbitrary hyperbolic surface $S_{g,b}$ with non empty boundary.

\subsection{Affine deformations of $G_{g,b}$}
In the previous paper \cite{m2016}, the author introduced the {\it affine twist cocycle}.
We put a reference point(cocycle) in ${\rm H}^1(G_{g,b}, {\mathbb R}_1^2)$. 
For any cocycle ${\bf u}$ on $G_{g,b}$, we can determine how much ${\bf u}$ has the affine twist cocycles. We call the quantity an {\it affine twist parameter}.

Now we will parametrize ${\rm H}^1(G_{g,b}, {\mathbb R}_1^2)$; we decompose $S_{g,b}$ into $g$ handles and $(g+b-2)$ pairs of pants. (See Figure $\ref{pantsdecomposition}$.) 
We notice that this decomposition is associated with the Fenchel-Nielsen coordinates in Teichm\"uller theory.
\begin{theorem}
\label{deformation space}
The affine deformation space ${\rm H}^1 (G_{g,b}, {\mathbb R}_1^2)$ can be linearly parametrized by the Margulis invariants and the affine twist parameters with respect to the above decomposition under the assumption that each set of generators of the once-holed tori does not have an angle $\pi/2$. (Here the angles will be defined in $\S \ref{sec setting}$.
)
\end{theorem}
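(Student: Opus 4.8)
The plan is to exhibit the proposed collection of invariants as a linear isomorphism from ${\rm H}^1(G_{g,b},{\mathbb R}_1^2)$ onto a Euclidean space of the same dimension, and then to prove injectivity by an induction over the pieces of the decomposition. First I would fix the dimensions. Since $S_{g,b}$ has non-empty boundary, $G_{g,b}$ is free of rank $2g+b-1$, and being non-elementary and purely hyperbolic it fixes no nonzero vector of ${\mathbb R}_1^2$, so $\dim_{\mathbb R}{\rm H}^1(G_{g,b},{\mathbb R}_1^2)=3(2g+b-2)=6g+3b-6$. On the other side, refining the handle-and-pants decomposition by cutting each once-holed torus along one of its chosen generators produces an ordinary pants decomposition with $3g+b-3$ interior curves; recording one Margulis invariant for each interior curve and each of the $b$ original boundary curves, together with one affine twist parameter for each interior curve, gives $(3g+b-3)+b+(3g+b-3)=6g+3b-6$ numbers. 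The map $\Phi$ so obtained is linear, since each Margulis invariant is the functional $[{\bf u}]\mapsto\langle{\bf u}(c),x^0(c)\rangle$ and the affine twist parameters are linear by their construction in \cite{m2016}. As source and target have the same dimension, it suffices to prove that $\Phi$ is injective.

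For injectivity I would argue by induction along the dual tree of the decomposition: order the pieces (pairs of pants and once-holed tori) so that each piece $P$ after the first is attached to the union $S'$ of its predecessors along a single decomposition curve $c$, giving $G_S=G_{S'}*_{\langle c\rangle}G_P$ for $S=S'\cup_c P$. All the subsurface groups occurring are non-elementary with no invariant vectors, and for the hyperbolic element $c$ both $({\mathbb R}_1^2)^{c}$ and ${\mathbb R}_1^2/(c-1){\mathbb R}_1^2$ are one-dimensional, so the Mayer--Vietoris sequence collapses to the short exact sequence
\[
0 \to {\mathbb R} \xrightarrow{\ \iota_c\ } {\rm H}^1(G_S,{\mathbb R}_1^2) \xrightarrow{\ {\rm res}\ } {\rm H}^1(G_{S'},{\mathbb R}_1^2)\oplus{\rm H}^1(G_P,{\mathbb R}_1^2) \xrightarrow{\ \delta\ } {\mathbb R} \to 0 ,
\]
where $\delta$ is, up to a normalizing constant, the difference of the two Margulis invariants of $c$, and the line $\ker({\rm res})={\rm im}(\iota_c)$ is spanned by the affine twist cocycle along $c$ from \cite{m2016}, whose ${\mathbb R}$-coordinate is precisely the affine twist parameter. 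Thus, if a class $[{\bf u}]$ has all its coordinates equal to zero, then its restrictions to $G_{S'}$ and to $G_P$ have vanishing coordinates, hence vanish by the inductive hypothesis; therefore $[{\bf u}]\in{\rm im}(\iota_c)$, and its twist parameter along $c$ --- one of the vanishing coordinates --- is $0$, so $[{\bf u}]=0$.

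Two base cases remain. For a pair of pants $G_{0,3}=\langle a,b\rangle$, the triple $({\rm Mar}_{\bf u}(a),{\rm Mar}_{\bf u}(b),{\rm Mar}_{\bf u}(ab))$ is an isomorphism onto ${\mathbb R}^3$; this is well known and is the base case of the parametrization in \cite{m2016}, so it may simply be quoted. For a once-holed torus $G_{1,1}=\langle a,b\rangle$ with boundary $[a,b]$, cutting along $a$ presents it as a pair of pants with two cuffs identified, so the HNN form of Mayer--Vietoris again gives $\dim{\rm H}^1(G_{1,1},{\mathbb R}_1^2)=3$ with the affine twist cocycle along $a$ spanning a line; what must be checked is that the three invariants attached to the torus --- ${\rm Mar}_{\bf u}([a,b])$, ${\rm Mar}_{\bf u}(a)$, and the affine twist parameter along $a$ --- form a coordinate system, i.e. that the associated $3\times3$ transition matrix relative to a fixed basis of ${\rm H}^1(G_{1,1},{\mathbb R}_1^2)$ is invertible.

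This last point is where I expect the real work, and the hypothesis, to lie. Representing $a$ and $b$ as explicit elements of ${\rm SO}^o(2,1)$ adapted to the positions of their axes, one expands the commutator cocycle,
\[
{\bf u}(aba^{-1}b^{-1}) = (1-aba^{-1})\,{\bf u}(a) + (a-aba^{-1}b^{-1})\,{\bf u}(b) ,
\]
writes ${\rm Mar}_{\bf u}([a,b])$, ${\rm Mar}_{\bf u}(a)$ and the twist parameter as linear forms in ${\bf u}(a),{\bf u}(b)$ modulo coboundaries, and computes the determinant of the resulting $3\times3$ matrix as an explicit function of the translation lengths of $a$, $b$, $[a,b]$ and of the angle $\theta$ between the axes of the generators, in the sense of $\S \ref{sec setting}$. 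I anticipate that this determinant contains a factor vanishing precisely when $\theta=\pi/2$ --- geometrically, at a right angle ${\rm Mar}([a,b])$ ceases to be independent of ${\rm Mar}(a)$ and the twist parameter --- which is exactly the excluded configuration. Carrying out this determinant computation cleanly, and identifying its zero set with the stated angle condition, is the principal obstacle; the rest is the Mayer--Vietoris bookkeeping described above.
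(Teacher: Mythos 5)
Your Mayer--Vietoris skeleton is sound, and in fact it is essentially a cohomological rewording of what the paper does by hand: the short exact sequence you write down, with the twist class spanning $\ker({\rm res})$, is exactly the content of the combination construction and the affine twist cocycle (Definitions \ref{combination}, \ref{affine twist}, \ref{new combination}, \ref{new affine twist} and the bijection stated just before the proof of Theorem \ref{deformation space}), and the pair-of-pants base case is indeed quotable. The problem is that you stop precisely at the step that carries the whole content of the theorem. The once-holed-torus base case is not an anticipated determinant to be ``carried out cleanly'' later --- it is the theorem's main ingredient, and the paper devotes all of $\S\ref{sec torus}$ to it (Lemma \ref{existence}, Lemma \ref{trans v}, Lemma \ref{uT}, Proposition \ref{once-holed torus}), using the explicit identities \eqref{equation} and \eqref{kappa and zeta3} together with Lemma 6.1 of \cite{cdg2010}. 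As written, your proposal establishes nothing for the handle and therefore does not prove the statement.

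Moreover, the base case you propose is not the one the theorem asserts, and your guess about where the hypothesis $\theta\neq\pi/2$ enters is almost certainly wrong for your coordinates. The theorem parametrizes each handle by the three Margulis invariants ${\rm Mar}_{\bf u}(w_1)$, ${\rm Mar}_{\bf u}(w_2)$, ${\rm Mar}_{\bf u}([w_1,w_2])$, with no twist parameter inside the handle; the angle condition appears because, by \eqref{equation}, at $\theta=\pi/2$ the functional ${\rm Mar}_{\bf u}([w_1,w_2])$ collapses to a fixed linear combination of ${\rm Mar}_{\bf u}(w_1)$ and ${\rm Mar}_{\bf u}(w_2)$. You instead take $({\rm Mar}_{\bf u}([a,b]),\,{\rm Mar}_{\bf u}(a),\,\hbox{twist along }a)$, with $a$ non-separating (a twist the paper never defines, though it is definable via the HNN splitting: zero on the cut-open pants group, ${\bf X}_a^0$ on the stable letter $b$). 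For that cocycle one computes ${\bf AT}_a([a,b])=({\rm Id}-[a,b]){\bf X}_a^0$, so ${\rm Mar}_{{\bf AT}_a}(a)={\rm Mar}_{{\bf AT}_a}([a,b])=0$, while \eqref{kappa and zeta3} shows that ${\rm Mar}_{\bf u}([a,b])$, expressed in the coordinates $({\rm Mar}_{\bf u}(a),{\rm Mar}_{\bf u}(b),{\rm Mar}_{\bf u}(ab))$ of \cite{cdg2010}, always has a nonzero ${\rm Mar}_{\bf u}(b)$-component (its coefficient is proportional to $(\lambda_1\lambda_2-1)/\sin\theta$), hence is never proportional to ${\rm Mar}_{\bf u}(a)$ --- even at $\theta=\pi/2$. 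So the determinant you expect to vanish at a right angle does not; your coordinate system appears non-degenerate for all angles, which means that even if completed your argument would prove a parametrization adapted to a different (fully refined) decomposition, and would neither yield the stated coordinates nor explain the stated hypothesis. To prove the theorem you must do what Proposition \ref{once-holed torus} does: show that the triple of Margulis invariants of $w_1$, $w_2$ and the boundary determines the class and realizes all of ${\mathbb R}^3$ exactly when $\theta_{w_1}^{w_2}\neq\pi/2$.
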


\subsection{Infinitesimal deformation of $S_{g,b}$}
In \cite{gm2000}, Goldman and Margulis discovered a certain relation between Margulis invariants and infinitesimal deformations of hyperbolic structures by using the identification between ${\mathbb R}_1^2$ and the Lie algebra $sl_2({\mathbb R})$ of ${\rm PSL}(2, {\mathbb R})$. 
As was shown in \cite{m2016}, the affine twist cocycles for the special loops are recognized as infinitesimal deformations of Fenchel-Nielsen twist deformations of $S_{0,b}$. 
Indeed, our affine twist cocycle satisfies the cosine formula which is an analogous to Wolpert's formula for Fenchel-Nielsen twist (see \cite{w1981}).
Let $\ell : G_{g,b} \to {\mathbb R}$ be a (hyperbolic) translation length.
In this paper, we extend this recognition as follows:

\begin{theorem}
\label{cosine formula}
On a hyperbolic surface $S_{g,b}$, consider {\rm any} geodesic loop $\sigma \in \pi_1(S)$. 
Suppose that {\rm another} geodesic loop $h$ {\rm separates} the $S$ into two surfaces whose interiors are disjoint. 
Let $\sigma(t) (t \in {\mathbb R})$ be a deformation of $\sigma$ by ${\bf AT}_h$ under the infinitesimal deformation of Goldman-Margulis. 
$( {\rm Here} \, \sigma(0) = \sigma.)$
Let $\ell_{\sigma}(t) := \ell (\sigma(t))$.
Then a rate of the infinitesimal deformation of hyperbolic length $\ell(\sigma)=\ell_\sigma(0)$ is 
\begin{eqnarray}
\label{wolpert}
\left. \frac{d \, \ell_{\sigma}}{dt} (t) \right|_{t=0} = 2 \sum_{p \in h \cap \sigma}\cos{(\theta_h^{\sigma})_p} ,
\end{eqnarray}
where $( \theta_h^{\sigma} )_p$ is an angle at $p \in S_{g,b}$, which is defined in $\S \ref{sec setting}$.
\end{theorem}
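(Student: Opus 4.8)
The plan is to reduce the statement, through the Goldman--Margulis identification of ${\mathbb R}_1^2$ with $sl_2({\mathbb R})$, to a purely Lorentzian computation of the Margulis invariant ${\rm Mar}_{{\bf AT}_h}(\sigma)$, and then to evaluate that invariant one crossing at a time using the explicit shape of the affine twist cocycle established in \cite{m2016}. Recall that for a hyperbolic $g\in G$ the linear part has a one-dimensional invariant axis spanned by a spacelike vector, and ${\rm Mar}_{{\bf u}}(g)=\langle {\bf u}(g),X_g\rangle$, where $X_g$ is the unit spacelike vector along that axis with the orientation convention of $\S\ref{sec setting}$. The key input from \cite{gm2000} is that if ${\bf u}$ is viewed as the tangent vector to the Teichm\"uller space determined by the infinitesimal affine deformation $\rho_{{\bf u}}$, then the derivative of the hyperbolic translation length of any $g$ equals (a fixed universal constant times) ${\rm Mar}_{{\bf u}}(g)$. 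Applying this with ${\bf u}={\bf AT}_h$ and $g=\sigma$, the theorem becomes equivalent to the identity
\begin{equation*}
{\rm Mar}_{{\bf AT}_h}(\sigma)=2\sum_{p\in h\cap\sigma}\cos(\theta_h^{\sigma})_p .
\end{equation*}

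Next I would decompose $\sigma$ along its crossings with $h$. Lift everything to ${\mathbb H}^2$ and fix a lift $\widetilde{\sigma}$ of the axis of $\sigma$; it meets the $G$-orbit of the axis of $h$ transversally, and exactly one period of these crossings projects onto the finite set $h\cap\sigma$. This yields a factorization $\sigma=\gamma_1\gamma_2\cdots\gamma_m$ in $G$ in which each $\gamma_j$ carries exactly one crossing $p_j$, and the cocycle condition gives
\begin{equation*}
{\bf AT}_h(\sigma)=\sum_{j=1}^{m}(\gamma_1\cdots\gamma_{j-1})\cdot{\bf AT}_h(\gamma_j),
\end{equation*}
where each $\gamma_1\cdots\gamma_{j-1}$ acts on ${\mathbb R}_1^2$ through ${\rm SO}^o(2,1)$. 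By the description of the affine twist cocycle in \cite{m2016}, ${\bf AT}_h(\gamma_j)$ is $\pm$ the unit spacelike vector $X_{h_j}$ along the axis of the translate $h_j$ of $h$ that $\widetilde{\sigma}$ crosses at the lift $\widetilde{p}_j$ of $p_j$, the sign recording the co-orientation of that crossing. Pairing with $X_\sigma$ and using the ${\rm SO}^o(2,1)$-invariance of the Lorentzian form, the $j$-th term becomes $\pm\langle X_{h_j},\,(\gamma_1\cdots\gamma_{j-1})^{-1}X_\sigma\rangle$, and $(\gamma_1\cdots\gamma_{j-1})^{-1}X_\sigma$ is precisely the unit spacelike normal of the lift of the axis of $\sigma$ passing through $\widetilde{p}_j$.

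The geometric heart of the argument is the local identity: if two complete geodesics in ${\mathbb H}^2$ cross at a point with angle $\theta$, then the associated unit spacelike normals $v_1,v_2\in{\mathbb R}_1^2$ satisfy $\langle v_1,v_2\rangle=\pm\cos\theta$, since the restriction of the Lorentzian form to the (spacelike) tangent plane at the intersection point is the Euclidean angle form there and $v_i$ is the normal of the $i$-th geodesic inside that plane. Substituting this into the sum above gives ${\rm Mar}_{{\bf AT}_h}(\sigma)=c\sum_{p\in h\cap\sigma}\cos(\theta_h^{\sigma})_p$, where the constant $c$ (equal to $2$ with the present normalizations) absorbs the normalizations of ${\bf AT}_h$, of ${\rm Mar}$, and of the Goldman--Margulis map.

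The main obstacle is the sign bookkeeping in the general, possibly non-simple and multiply-crossing, case: one must verify that the three sources of sign --- the co-orientation of each crossing of the lifts, the orientation conventions built into $X_\sigma$, $X_{h_j}$ and into the angle $(\theta_h^{\sigma})_p$, and the conjugations by the linear parts $\gamma_1\cdots\gamma_{j-1}$ --- combine so that each crossing contributes exactly $+\cos(\theta_h^{\sigma})_p$, with no interference between distinct crossings. Organizing this requires a normal form for how $\widetilde{\sigma}$ threads the $G$-orbit of the axis of $h$, and it is here that the hypothesis that $h$ is separating enters, both in the construction of ${\bf AT}_h$ and to split the crossings according to the two sides of $h$ and control the returning arcs. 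As a consistency check, in the special-loop case this recovers the cosine formula of \cite{m2016}, and transporting the identity through the Goldman--Margulis correspondence recovers Wolpert's classical cosine formula for the Fenchel--Nielsen twist.
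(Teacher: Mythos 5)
Your overall strategy is the same as the paper's: reduce via the Goldman--Margulis lemma to an identity for ${\rm Mar}_{{\bf AT}_h}(\sigma)$, then evaluate the twist cocycle crossing by crossing using the fact that the Lorentzian pairing of the unit spacelike vectors of two intersecting axes is $\cos\theta$ (Lemma \ref{angle lemma}). But the central step is misstated, and it is exactly the step that carries the content. The affine twist cocycle is defined group-theoretically from the splitting of $\pi_1(S)$ along the separating curve: ${\bf AT}_h$ vanishes on $\Sigma_1$, equals ${\bf X}_h^0-g\,{\bf X}_h^0$ for $g\in\Sigma_2$, and is extended by the cocycle condition. So your claim that one can factor $\sigma=\gamma_1\cdots\gamma_m$ with one crossing per letter and have ${\bf AT}_h(\gamma_j)=\pm X_{h_j}$, a single signed unit spacelike vector on a translate of the axis of $h$, is not a description of ${\bf AT}_h$ at all: on a $\Sigma_2$-letter the value is a \emph{difference} of two unit spacelike vectors, and that single letter accounts for \emph{two} crossings of $h$ (one entering $S_2$, one leaving), while on a letter that itself crosses $h$ the value is whatever the cocycle condition forces and is in general not a unit vector. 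The paper instead writes $\sigma$ as an alternating word $\sigma_1^1\sigma_1^2\cdots\sigma_n^1\sigma_n^2$ with $\sigma_k^{\xi}\in\Sigma_{\xi}$, uses the vanishing on $\Sigma_1$ to get ${\bf AT}_h(\sigma)=\sum_j\sigma_1^1\cdots\sigma_j^1\,({\bf Y}^0-\sigma_j^2{\bf Y}^0)$, moves the conjugators onto ${\bf X}_\sigma^0$ via $\phi\,{\bf X}_\sigma^0={\bf X}_{\phi\sigma\phi^{-1}}^0$ (so each term involves a cyclic permutation of the word), and then identifies the two resulting families of terms with the crossings $p_j$ entering $S_2$, contributing $\cos(\theta_h^{\sigma})_{p_j}$, and the crossings $q_j$ leaving $S_2$, contributing $-\cos(\pi-(\theta_h^{\sigma})_{q_j})=+\cos(\theta_h^{\sigma})_{q_j}$. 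That sign cancellation at the returning crossings is precisely the point, and you explicitly defer it (``the main obstacle is the sign bookkeeping''), so the proof is not complete as written.

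A smaller but real defect: you leave the factor $2$ as an unspecified constant $c$ ``absorbing normalizations.'' In the paper the $2$ comes solely from the Goldman--Margulis lemma $\left.\frac{d\,\ell_{\sigma}}{dt}(t)\right|_{t=0}=2\,{\rm Mar}_{{\bf AT}_h}(\sigma)$, and the crossing sum itself has coefficient exactly $1$ per intersection point, i.e.\ ${\rm Mar}_{{\bf AT}_h}(\sigma)=\sum_{p\in h\cap\sigma}\cos(\theta_h^{\sigma})_p$. Since the theorem asserts a specific constant, it must be pinned down, not absorbed.
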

For non-separating loops on $S_{g,b}$, this result is still open.
\newline

This paper is organized as follows: Basic notations and definitions are introduced in $\S \ref{sec setting}$. 
In order to consider the affine deformation space of $S_{g, b}$, we treat affine deformations of its handles.
Namely, we parametrize the affine deformation space of a once-holed sphere by the Margulis invariants in $\S \ref{sec torus}$. Then we prove Proposition \ref{once-holed torus}.
In next section $\S \ref{sec coordinates}$, Theorem \ref{deformation space} is shown, and an important problem is raised. Finally, in $\S \ref{sec deformation}$, we calculate the correspondences of the infinitesimal deformations of Goldman-Margulis. Namely, we prove Theorem \ref{cosine formula}.

\subsection*{Acknowledgment}
The author thanks Professor Hideki Miyachi for beneficial advices and many supports. 

\section{Setting}
\label{sec setting}
Here we introduce some basic notations and definitions.

\subsection{Basic notations}
Let $S_{g,b}$ be a hyperbolic surface homeomorphic to a compact orientable surface of genus $g$ with $b$ boundary components. We denote by $G_{g,b}$ the fundamental group of $S_{g,b}$, which is naturally considered as a Fuchsian group associated with $S_{g,b}$.
We always identify a closed geodesic curve with an element of ${\rm PSL(2, {\mathbb R})} \cong {\rm SO}^o(2,1)$. 

\subsection{Lorentzian Geometry}
A $(2+1)$-Lorentzian spacetime ${\mathbb R}_1^2$ is an affine space whose associated inner product, called a {\it Lorentzian inner product}, is defined by $B([x_1, x_2, x_3], [y_1, y_2, y_3]) = x_1 y_1 + x_2 y_2 - x_3 y_3$ over the canonical basis in ${\mathbb R}_1^2$.  
A set of future-pointing rays in the interior of the upper part of the light cone (with respect to a certain reference point in ${\mathbb R}_1^2$) is regarded as a Klein-Poincare hyperbolic disk model ${\mathbb H}^2$ in ${\mathbb R}_1^2$, which is induced from the inner product $B$ (see \cite{cdg2010} for detail.).

The following definitions are introduced in \cite{dg1990, cdg2010}.
An affine isometry group of ${\mathbb R}_1^2$ is isomorphic to the twisted product ${\rm SO}^o(2,1) \ltimes {\mathbb R}_1^2$. 
Every element $\eta$ of this group is represented as a pair $(h, {\bf u}(h))$ for $h \in {\rm SO}^o(2,1)$ and ${\bf u}(h) \in {\mathbb R}_1^2$. 
A {\it hyperbolic} element $h$ has three distinct real eigenvalues.
We choose three normalized eigenvectors as follows:
\begin{enumerate}
\item[$(i)$] The future-pointing null vector ${\bf X}_h^-$ has the smallest eigenvalue and the Euclidean norm is $1$.
\item[$(ii)$] The future-pointing null vector ${\bf X}_h^+$ has the largest one, and the Euclidean norm is $1$.
\item[$(iii)$] The unit spacelike vector ${\bf X}_g^0$ has $1$ as an eigenvalue and its orientation is defined by $\det{({\bf X}_h^0, {\bf X}_h^-, {\bf X}_h^+)} > 0$.
\end{enumerate}
Note that the subspace $\langle {\bf X}_h^-, {\bf X}_h^+ \rangle_{{\mathbb R}}$ generated by ${\bf X}_h^-$ and ${\bf X}_h^+$ coincides with the orthogonal complements $({\bf X} _h^0)^{\perp}$ of ${\bf X}_h^0$. 
A transformation $\eta$ is also called {\it hyperbolic} if the linear part $h$ is hyperbolic.
The set $\{ {\bf X}_h^0, {\bf X}_h^-, {\bf X}_h^+ \}$ is called a {\it null frame} of $h$ (or $\eta$).
The following lemma is well known.
\begin{lemma}
\label{angle lemma}
Let $h_1, h_2$ be hyperbolic elements in ${\rm SO}^o(2,1)$ whose unique oriented invariant axes in ${\mathbb H}^2$ intersect.
The angle $\theta$ between tangent vectors of them at their intersection satisfies 
$B({\bf X}_{h_1}^0, {\bf X}_{h_2}^0) = \cos{\theta}$.
\end{lemma}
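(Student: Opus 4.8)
The plan is to identify the spacelike eigenvector ${\bf X}_h^0$ with the unit normal to the invariant axis of $h$ at every point of that axis, and then to use that the tangent plane to ${\mathbb H}^2$ at the common intersection point is a genuine (positive-definite) Euclidean plane, on which passing from a tangent vector to its normal is a quarter-turn and hence preserves angles.

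First I would recall the model: points of ${\mathbb H}^2 \subset {\mathbb R}_1^2$ are future-pointing unit timelike vectors $p$ with $B(p,p)=-1$, the tangent space $T_p{\mathbb H}^2$ is the $B$-orthogonal complement $p^{\perp}$, and $B$ restricted to $p^{\perp}$ is positive definite, so hyperbolic angles are computed by $B$ exactly as Euclidean angles. For a hyperbolic $h$, the two fixed points of $h$ on $\partial{\mathbb H}^2$ are the null rays spanned by ${\bf X}_h^{\pm}$, so the oriented invariant axis of $h$ is the geodesic joining them, namely ${\mathbb H}^2 \cap \langle {\bf X}_h^-, {\bf X}_h^+ \rangle_{{\mathbb R}} = {\mathbb H}^2 \cap ({\bf X}_h^0)^{\perp}$, using the orthogonality noted right after item $(iii)$.

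Next I would show that for every $p$ on this axis one has ${\bf X}_h^0 \in T_p{\mathbb H}^2$ and ${\bf X}_h^0$ is $B$-orthogonal to the tangent direction of the axis at $p$: indeed $p \in ({\bf X}_h^0)^{\perp}$ forces ${\bf X}_h^0 \in p^{\perp} = T_p{\mathbb H}^2$, and since the axis is contained in the \emph{linear} subspace $({\bf X}_h^0)^{\perp}$, the velocity vector of the axis at $p$ lies in that subspace too. Hence ${\bf X}_h^0$ is, up to sign, the unit normal of the oriented axis inside the oriented Euclidean plane $T_p{\mathbb H}^2$, and the orientation rule $(iii)$, $\det({\bf X}_h^0, {\bf X}_h^-, {\bf X}_h^+)>0$, fixes this sign uniformly in $h$; so the oriented unit tangent $v_h$ of the axis and ${\bf X}_h^0$ differ by one and the same quarter-turn $R$ of the tangent plane.

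Finally, at the intersection point $p$ of the axes of $h_1$ and $h_2$, both ${\bf X}_{h_1}^0$ and ${\bf X}_{h_2}^0$ are unit vectors of the Euclidean plane $T_p{\mathbb H}^2$ with ${\bf X}_{h_i}^0 = R\, v_{h_i}$ for the same $R$; since a rotation preserves angles, the angle between ${\bf X}_{h_1}^0$ and ${\bf X}_{h_2}^0$ equals the angle $\theta$ between $v_{h_1}$ and $v_{h_2}$, whence $B({\bf X}_{h_1}^0, {\bf X}_{h_2}^0) = \cos\theta$. The one step that calls for genuine care — the main obstacle — is the orientation bookkeeping: verifying that convention $(iii)$ really makes $v_h \mapsto {\bf X}_h^0$ the \emph{same} quarter-turn for every hyperbolic $h$ (rather than $R$ for some and $R^{-1}$ for others), so that the two quarter-turns cancel and the right-hand side is $+\cos\theta$ and not $-\cos\theta$. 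Everything else is a direct unwinding of the definitions in $(i)$–$(iii)$, and one could alternatively run the same argument through the identification ${\mathbb R}_1^2 \cong sl_2({\mathbb R})$, computing $B$ of the two normalized infinitesimal generators.
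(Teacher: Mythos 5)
The paper offers no argument for this lemma at all --- it is stated as ``well known'' and used as a black box --- so there is no internal proof to measure you against; judged on its own terms, your proposal is the standard argument and it is sound. Working in the hyperboloid picture, your identifications are all correct: the axis of $h$ is ${\mathbb H}^2\cap({\bf X}_h^0)^{\perp}$, so at any point $p$ of the axis both ${\bf X}_h^0$ and the oriented unit tangent $v_h$ lie in the positive-definite plane $T_p{\mathbb H}^2=p^{\perp}$ and are $B$-orthogonal, whence ${\bf X}_h^0=\pm R\,v_h$ for the quarter-turn $R$ of $T_p{\mathbb H}^2$. The one step you flag but do not actually carry out --- that condition $(iii)$ makes the sign the \emph{same} for every hyperbolic $h$ through $p$ --- is true and closes in one or two lines, and you should include it: writing $p=a{\bf X}_h^-+b{\bf X}_h^+$ with $a,b>0$ and $v_h$ a positive multiple of $-a{\bf X}_h^-+b{\bf X}_h^+$, one gets $\det(p,v_h,{\bf X}_h^0)=2ab\,\det({\bf X}_h^0,{\bf X}_h^-,{\bf X}_h^+)>0$, so $(v_h,{\bf X}_h^0)$ is a positively oriented frame of $T_p{\mathbb H}^2$ for every $h$, i.e.\ the quarter-turn is uniformly the same one. (Alternatively: the null frame is equivariant, ${\bf X}_{ghg^{-1}}^{\square}$ being a positive multiple of $g{\bf X}_h^{\square}$ since $\det g=1$, and the stabilizer of $p$ acts on $T_p{\mathbb H}^2$ by rotations commuting with $R$, so one explicit boost computation settles all $h$.) With that verification supplied, $B({\bf X}_{h_1}^0,{\bf X}_{h_2}^0)=B(Rv_{h_1},Rv_{h_2})=B(v_{h_1},v_{h_2})=\cos\theta$ exactly as you argue, and your proof is complete.
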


In a surface ({\it resp.} ${\mathbb H}^2$), let us denote by $(\theta_{h_1}^{h_2})_p$ an angle between two oriented geodesic loops ({\it resp.} unique invariant axes) $h_1$, $h_2$ at their intersection $p$. 
The choice of the angle is the one, seen $h_2$ (forget its orientation) in the left-hand direction along the direction of the orientation of the $h_1$.
Notice that the angle $(\theta_{h_1}^{h_2})_p \in (0, \pi) \subset {\mathbb R}$.
We may omit the subscript for a point when the point is clear from context.

\subsection{Affine deformations of a hyperbolic surface}
A homomorphism $\rho:G_{g,b} \hookrightarrow {\rm SO}^o(2,1) \ltimes {\mathbb R}_1^2$ is called an {\it affine deformation} if $\rho(h) = (h, {\bf u}(h))$ for all $h \in G_{g,b}$. 
The translation part is called a {\it cocycle} ${\bf u}:G_{g,b} \to {\mathbb R}_1^2$. 
The cocycle satisfies a {\it cocycle condition}: ${\bf u}(h_1 h_2) = h_1 {\bf u}(h_2) + {\bf u}(h_1)$ for $h_1, h_2 \in G_{g, b}$. A {\it coboundary} $\delta_{{\bf v}}$ is a cocycle which forms $\delta_{{\bf v}} (h) = {\bf v} - h {\bf v} \in ( {\bf X}_h^0 )^{\perp}$ for some ${\bf v} \in {\mathbb R}_1^2$.
The coboundary $\delta_{{\bf v}}$ corresponds to a translation by ${\bf v}$. Denote a space of cocycles ({\it resp.} coboundaries) by ${\rm Z}^1(G_{g,b}, {\mathbb R}_1^2)$ ({\it resp.} ${\rm B}^1 (G_{g,b}, {\mathbb R}_1^2))$. A quotient space ${\rm H}^1 (G_{g,b}, {\mathbb R}_1^2) :={\rm Z}^1(G_{g,b}, {\mathbb R}_1^2) /{\rm B}^1 (G_{g,b}, {\mathbb R}_1^2) = \{ [{\bf u}] \mid {\bf u} \in {\rm Z}^1(G_{g,b}, {\mathbb R}_1^2) \}$ is regarded as the space of affine deformations of $G_{g,b}$.

\subsection{Margulis invariant}
If a hyperbolic element $\eta=(h, {\bf u}(h))$ acts freely on ${\mathbb R}_1^2$, it admits a unique invariant axis $C_{\eta}$. On $C_{\eta}$, $\eta$ acts as just a translation. The translation distance with respect to $B$ is called the {\it Margulis invariant} ${\rm Mar}_{{\bf u}}(h)$.
The Margulis invariant coincides with $B(\eta (x) - x, {\bf X}_{h}^0)$ for any $x \in {\mathbb R}_1^2$(Refer to \cite{ma1983} for the details.). Then the translation part of $\eta$ is represented as:
\begin{eqnarray}
{\bf u}(h) = {\rm Mar}_{{\bf u}}(h) {\bf X}_h^0 + c^- {\bf X}_h^+ + c^+ {\bf X}_h^+,
\end{eqnarray}
for some real numbers $c^{\pm}$. One of the properties is:
\begin{lemma}[\cite{dg2001, cd2009}]
\label{inject}
Let ${\bf u}, {\bf u}' \in {\rm Z}^1(G_{g,b}, {\mathbb R}_1^2)$. Assume that
${\rm Mar}_{{\bf u}}(h) = {\rm Mar}_{{\bf u}'}(h)$ for all $h \in G_{g,b}$. 
Then $[{\bf u}] = [{\bf u}']$ holds.
\end{lemma}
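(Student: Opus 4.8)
The plan is to pass to the difference cocycle ${\bf w}:={\bf u}-{\bf u}'\in{\rm Z}^1(G_{g,b},{\mathbb R}_1^2)$. Since ${\rm Mar}_{\bf u}(h)=B({\bf u}(h),{\bf X}_h^0)$ is linear in the cocycle (the neutral vector ${\bf X}_h^0$ depending only on $h$), the hypothesis reads ${\rm Mar}_{\bf w}\equiv0$, i.e.
\[
{\bf w}(h)\in({\bf X}_h^0)^{\perp}=\langle{\bf X}_h^-,{\bf X}_h^+\rangle_{\mathbb R}\qquad\text{for all }h\in G_{g,b}.
\]
As every coboundary satisfies $\delta_{\bf v}(h)\in({\bf X}_h^0)^{\perp}$, it suffices to show ${\bf w}\in{\rm B}^1(G_{g,b},{\mathbb R}_1^2)$; equivalently, that $[{\bf u}]\mapsto{\rm Mar}_{\bf u}(\cdot)$ is injective on ${\rm H}^1(G_{g,b},{\mathbb R}_1^2)$.

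$G_{g,b}$ is free of rank $n=2g+b-1$; fix free generators $h_1,\dots,h_n$, which are hyperbolic, pairwise non-commuting, hence with pairwise distinct axes and no shared fixed point on $\partial{\mathbb H}^2$ (as $G_{g,b}$ has no parabolics); write $h^{\pm}$ for the endpoints of the axis of a hyperbolic $h$ in the directions of ${\bf X}_h^{\pm}$. If $n=1$ then ${\bf w}(h_1)\in({\bf X}_{h_1}^0)^{\perp}$ lies in the image of ${\bf v}\mapsto{\bf v}-h_1{\bf v}$, so ${\bf w}$ is a coboundary. Assume $n\ge2$. First subtract a coboundary so that ${\bf w}(h_1)=0$ (possible because ${\bf v}\mapsto{\bf v}-h_1{\bf v}$ surjects onto $({\bf X}_{h_1}^0)^{\perp}$), leaving the residual freedom $\delta_{s{\bf X}_{h_1}^0}$, $s\in{\mathbb R}$. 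Then ${\bf w}(h_1^k)=0$ and ${\bf w}(h_1^kh_j)=h_1^k{\bf w}(h_j)$, so applying the hypothesis to the hyperbolic element $h_1^kh_j$ and using ${\rm SO}^o(2,1)$-invariance of $B$,
\[
0={\rm Mar}_{\bf w}(h_1^kh_j)=B\bigl(h_1^k{\bf w}(h_j),{\bf X}_{h_1^kh_j}^0\bigr)=B\bigl({\bf w}(h_j),h_1^{-k}{\bf X}_{h_1^kh_j}^0\bigr)\qquad(k\in{\mathbb Z}).
\]
The key step is to identify $\lim_{k\to\pm\infty}h_1^{-k}{\bf X}_{h_1^kh_j}^0$. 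As $k\to+\infty$ the repelling and attracting fixed points of $h_1^kh_j$ converge to $h_j^{-1}(h_1^-)$ and $h_1^+$; transporting the axis by $h_1^{-k}$, its endpoints converge to $h_1^-$ and $h_j(h_1^+)$, so $h_1^{-k}{\bf X}_{h_1^kh_j}^0\to{\bf W}_j^+$, the unit neutral vector of the geodesic with those endpoints, which lies in $\langle{\bf X}_{h_1}^-,h_j{\bf X}_{h_1}^+\rangle_{\mathbb R}$; symmetrically $h_1^{-k}{\bf X}_{h_1^kh_j}^0\to{\bf W}_j^-\in\langle h_j{\bf X}_{h_1}^-,{\bf X}_{h_1}^+\rangle_{\mathbb R}$ as $k\to-\infty$. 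Passing to these limits and taking $k=0$ shows ${\bf w}(h_j)$ is $B$-orthogonal to ${\bf W}_j^+$, ${\bf W}_j^-$ and ${\bf X}_{h_j}^0$; these three vectors span exactly a $2$-plane (their dual geodesics are pairwise distinct, while the nonzero coboundary value $\delta_{{\bf X}_{h_1}^0}(h_j)={\bf X}_{h_1}^0-h_j{\bf X}_{h_1}^0$ is $B$-orthogonal to all three), whose $B$-orthogonal complement is therefore $\langle\delta_{{\bf X}_{h_1}^0}(h_j)\rangle_{\mathbb R}$, so ${\bf w}(h_j)=c_j\,\delta_{{\bf X}_{h_1}^0}(h_j)$ for some $c_j\in{\mathbb R}$. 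Subtracting $\delta_{c_2{\bf X}_{h_1}^0}$ (using up the last coboundary freedom) arranges ${\bf w}(h_2)=0$; running the same argument with $h_2$ in place of $h_1$ gives ${\bf w}(h_j)\in\langle\delta_{{\bf X}_{h_2}^0}(h_j)\rangle_{\mathbb R}$ for $j\ge3$, and since $\delta_{{\bf X}_{h_1}^0}(h_j)\not\parallel\delta_{{\bf X}_{h_2}^0}(h_j)$ in general position, ${\bf w}(h_j)=0$ for every $j\ge3$. Then ${\bf w}$ vanishes on all generators, hence ${\bf w}=0$, and ${\bf u}-{\bf u}'$ equals the sum of the two coboundaries subtracted; thus $[{\bf u}]=[{\bf u}']$.

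I expect the main obstacle to be the asymptotic analysis of the null frames $h_1^{-k}{\bf X}_{h_1^kh_j}^0$: the two limits must be established with care — in particular one cannot first let the axis of $h_1^kh_j$ converge and then apply $h_1^{-k}$, since the maps $h_1^{-k}$ are unbounded (that order would wrongly produce ${\bf X}_{h_1}^0$). A secondary point is to exclude the degenerate configurations in which the limit vectors fail to be in general position — notably the $\pi/2$ case isolated in Theorem~\ref{deformation space}; because the hypothesis is available for every element of $G_{g,b}$, these are circumvented by replacing the auxiliary generators $h_1,h_2$ with suitable words whose axes are in general position. As a conceptual cross-check (and an alternative argument), one may use the ${\rm SO}^o(2,1)\cong{\rm PSL}(2,{\mathbb R})$-equivariant identification ${\mathbb R}_1^2\cong sl_2({\mathbb R})$, under which $[{\bf w}]$ is a tangent vector to the character variety at the Fuchsian point whose pairing with the differential of each trace (equivalently, geodesic length) function is governed by ${\rm Mar}_{\bf w}$ through the Goldman--Margulis correspondence~\cite{gm2000}; vanishing of all Margulis invariants then forces $[{\bf w}]$ to annihilate the differentials of all length functions, which separate tangent vectors to Teichm\"uller space, so $[{\bf w}]=0$.
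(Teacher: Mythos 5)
The paper does not actually prove Lemma \ref{inject}: it is imported from Drumm--Goldman \cite{dg2001} and Charette--Drumm \cite{cd2009}, so your attempt can only be compared with those sources, and your main line --- pass to the difference cocycle ${\bf w}$, which has identically vanishing Margulis invariants by linearity of ${\rm Mar}$ in the cocycle, normalize ${\bf w}(h_1)=0$ using the surjectivity of $v\mapsto v-h_1v$ onto $({\bf X}_{h_1}^0)^{\perp}$, and extract orthogonality constraints from the asymptotics of ${\rm Mar}_{\bf w}(h_1^k h_j)$ as $k\to\pm\infty$ --- is exactly the flavour of the isospectrality arguments there. The skeleton is sound: $h_1^{-k}{\bf X}^0_{h_1^kh_j}={\bf X}^0_{h_jh_1^k}$, and your identification of the limiting endpoints ($h_1^-$ and $h_j(h_1^+)$, resp.\ $h_j(h_1^-)$ and $h_1^+$) is correct, as is your warning about the order of limits.

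Two points need repair. First, the claim that ${\bf W}_j^{\pm}$ \emph{lie in} $\langle {\bf X}_{h_1}^-, h_j{\bf X}_{h_1}^+\rangle_{\mathbb R}$ (resp.\ $\langle h_j{\bf X}_{h_1}^-,{\bf X}_{h_1}^+\rangle_{\mathbb R}$) is wrong: the neutral vector of a geodesic spans the $B$-orthogonal \emph{complement} of the null plane of its endpoints, which is what $({\bf X}_h^0)^{\perp}=\langle{\bf X}_h^-,{\bf X}_h^+\rangle_{\mathbb R}$ means. This slip does not derail you, because all you actually use is that $\delta_{{\bf X}_{h_1}^0}(h_j)$ is $B$-orthogonal to ${\bf W}_j^{\pm}$ and ${\bf X}_{h_j}^0$ --- but you assert that without proof; it is true, and the clean justification is to run your own computation on the coboundary $\delta_{{\bf X}_{h_1}^0}$, which vanishes on $h_1$ and has zero Margulis invariants, so $B(\delta_{{\bf X}_{h_1}^0}(h_j),\,h_1^{-k}{\bf X}^0_{h_1^kh_j})=0$ for every $k$ and the same limits apply. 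Second, the ``general position'' step is a genuine gap as written: the lines $\langle\delta_{{\bf X}_{h_1}^0}(h_j)\rangle$ and $\langle\delta_{{\bf X}_{h_2}^0}(h_j)\rangle$ coincide exactly when ${\bf X}_{h_j}^0\in\langle{\bf X}_{h_1}^0,{\bf X}_{h_2}^0\rangle_{\mathbb R}$, and the limit analysis itself degenerates if, say, $h_j(h_1^+)=h_1^-$; neither configuration is excluded by freeness and discreteness alone. The repair you gesture at does work and should be carried out: after your normalization ${\bf w}$ vanishes on the entire subgroup $\langle h_1,h_2\rangle$ by the cocycle condition, so \emph{any} element $g$ of this non-elementary subgroup can serve as the auxiliary axis, and the vectors ${\bf X}_g^0$ for such $g$ cannot all lie in a single $2$-plane, so two suitable choices force ${\bf w}(h_j)=0$. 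Note also that invoking the $\pi/2$ hypothesis of Theorem \ref{deformation space} here is a red herring: the lemma holds with no angle condition. Your closing $sl_2({\mathbb R})$/Goldman--Margulis argument is legitimate and arguably cleaner, provided you justify that the differentials of the trace (length) functions span the cotangent space of the whole character variety at the Fuchsian point --- which is fine here since the group is free and the Fuchsian locus is open, but it is a step, not a remark.
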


\section{Structures on once-holed torus}
\label{sec torus}
\subsection{Hyperbolic geometry of once-holed torus}
Let $S_{1,1}$ be a hyperbolic surface homeomorphic to a once-holed torus. 
The fundamental group $G_{1, 1}$ is isomorphic to a free group $\langle w_1, w_2 \rangle$ of rank two, where $w_1$ and $w_2$ are simple closed curves corresponding to a longitude loop  and a meridian loop in $S_{1,1}$ respectively (see Figure \ref{loops}).
The actions by the generators $w_1, w_2$ are illustrated in Figure $\ref{generators}$. 

\begin{figure}[htbp] 
\begin{center}
\begin{tabular}{c}
\begin{minipage}{0.5\hsize}
\begin{center} 
\includegraphics[width=4cm]{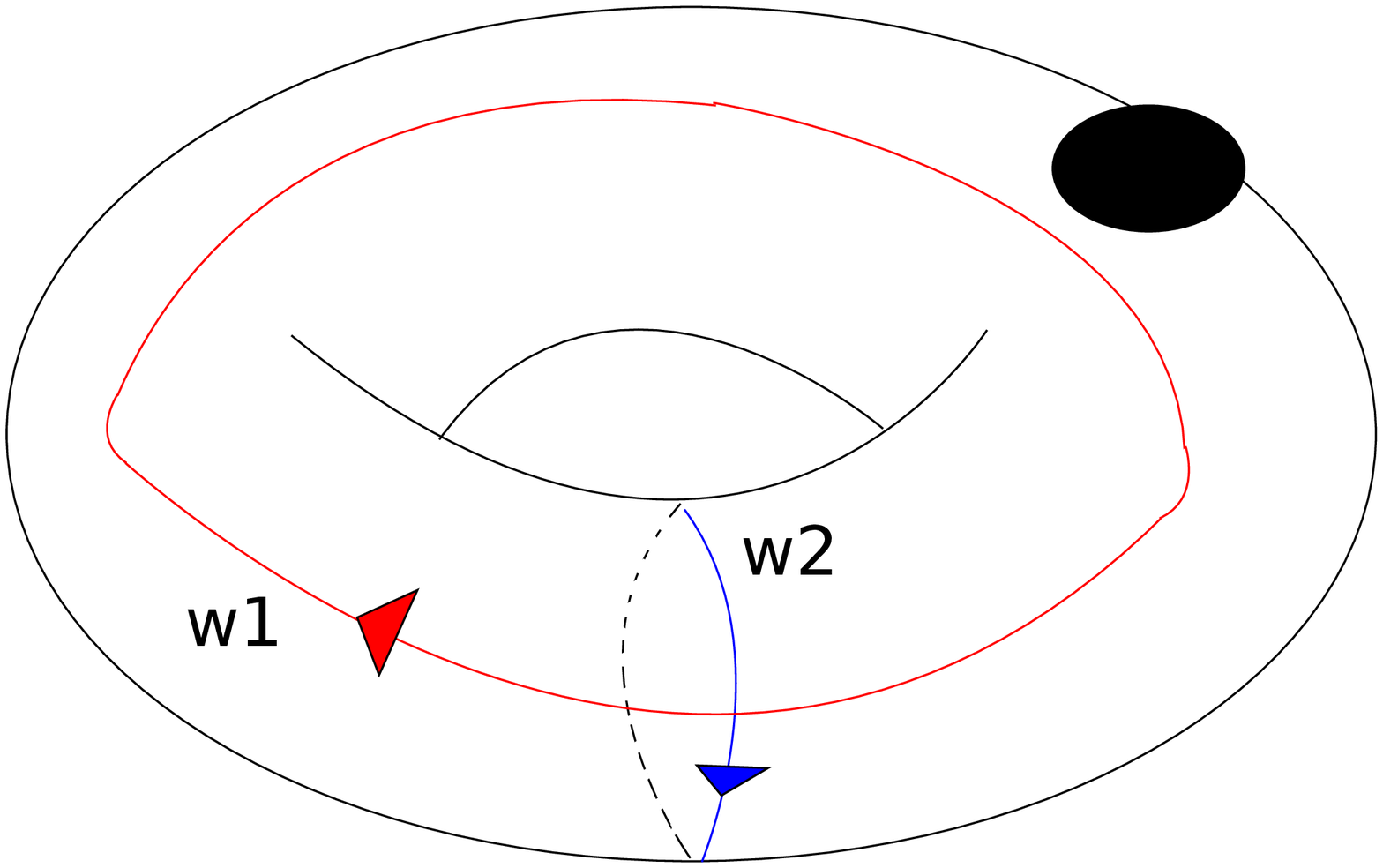} 
\end{center}
\caption{}
\label{loops}
\end{minipage}
\begin{minipage}{0.5\hsize}
\begin{center} 
\includegraphics[width=3cm]{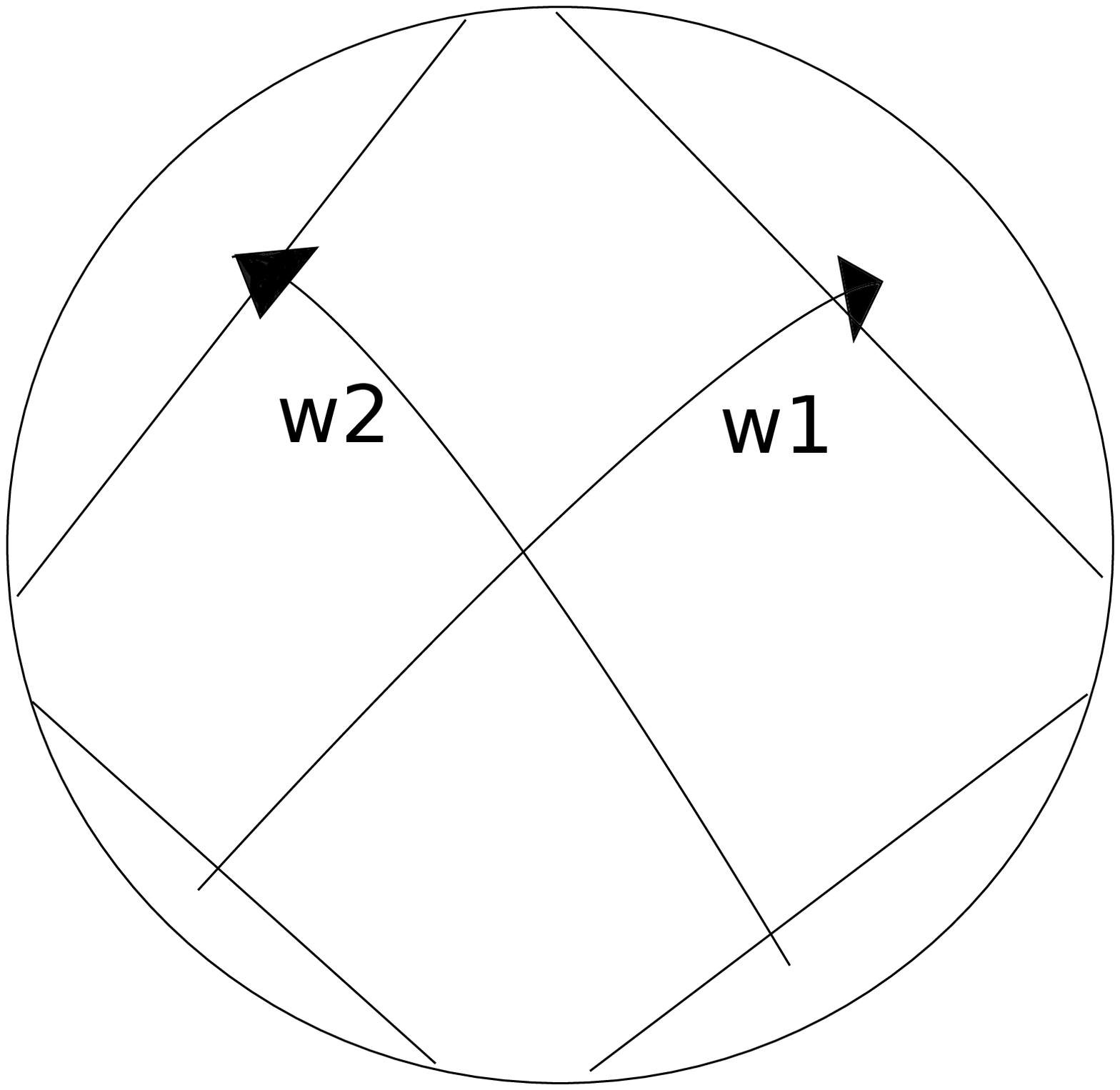} 
\end{center}
\caption{}
\label{generators}
\end{minipage}
\end{tabular}
\end{center}
\end{figure} 

When we cut $S_{1,1}$ along the loop $w_2$, we can get a pair of pants with $g_1, g_2, g_3$ as the boundary components;
\begin{equation}
g_1 := [w_1,  w_2], \, g_2 :=  w_2, \, g_3 := w_1 w_2^{-1} w_1^{-1} ,
\end{equation}
where $[w_1, w_2]$ is a commutator of $w_1$ and $w_2$.
On $S_{1,1}$, the loop $g_1$ equals to the unique boundary component and $g_2, g_3$ are same loop.
Let $P$ denote a group generated by $g_1, g_2$ and $g_3$. Note that $w_1 \not\in P$.
We set their null frames as follows:
\begin{eqnarray*}
w_1 &\leftrightarrow& \{ {\bf Y}_1^0, {\bf Y}_1^-, {\bf Y}_1^+ \}, \\
g_1 &\leftrightarrow& \{ {\bf X}_1^0, {\bf X}_1^-, {\bf X}_1^+ \}, \\
g_2 = w_2 &\leftrightarrow& \{ {\bf X}_2^0, {\bf X}_2^-, {\bf X}_2^+ \} = \{ {\bf Y}_2^0, {\bf Y}_2^-, {\bf Y}_2^+ \}, \\
g_3 &\leftrightarrow& \{ {\bf X}_3^0, {\bf X}_3^-, {\bf X}_3^+ \}.
\end{eqnarray*}

\subsection{Affine deformations of $G_{1,1}$}
The purpose of this part is to represent cocycles on $G_{1,1}$ by using the Margulis invariant of $g_1$. 
At first we check an arbitrary property of Margulis invariant of $g_1$.
\begin{lemma}[existence]
\label{existence}
Assume $\theta_{w_1}^{w_2} \neq \pi / 2$.
For any $\zeta_1, \zeta_2, \kappa \in {\mathbb R}$, there exists a cocycle ${\bf u}$ on $G_{1,1}$ such that, for some real numbers $d_1^{\pm}, d_2^{\pm}, c_3^{\pm} \in {\mathbb R}$, 
\begin{eqnarray*}
{\bf u} (w_1) &=& \zeta_1 {\bf Y}_1^0 + d_1^- {\bf Y}_1^- + d_1^+ {\bf Y}_1^+, \\
{\bf u} (w_2) &=& \zeta_2 {\bf Y}_2^0 + d_2^- {\bf Y}_2^- + d_2^+ {\bf Y}_2^+, \\
{\bf u} (g_1) &=& \kappa {\bf X}_1^0 + c_1^- {\bf X}_1^- + c_1^+ {\bf X}_1^+.
\end{eqnarray*}
If $\theta_{w_1}^{w_2} = \pi /2$, an equation 
\begin{equation}
\frac{\kappa}{K_{\frac{\pi}{2}}} = (1 + \lambda_1)(-1 + \lambda_2) \zeta_1 + (-1 + \lambda_1)(1 + \lambda_2) \zeta_2
\end{equation}
must be satisfied, where 
\begin{equation}
K_{\frac{\pi}{2}} := \frac{-2}{\sqrt{(-1 + \lambda_2)^2 + \lambda_1^2(-1+\lambda_2)^2 - 2 \lambda_1 (1 + 6 \lambda_2 + \lambda_2^2)}} .
\end{equation}
Here the $\lambda_1, \lambda_2 (>0)$ are the smallest eigenvalues of $w_1$ and $w_2$ respectively.
\end{lemma}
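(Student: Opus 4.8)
The plan is to use that $G_{1,1}=\langle w_1,w_2\rangle$ is free, so that a cocycle ${\bf u}\in{\rm Z}^1(G_{1,1},{\mathbb R}_1^2)$ is freely and linearly determined by the pair $({\bf u}(w_1),{\bf u}(w_2))\in{\mathbb R}_1^2\times{\mathbb R}_1^2$. Every vector of ${\mathbb R}_1^2$ expands in any null frame, and since ${\bf Y}_i^0={\bf X}_{w_i}^0$ is unit spacelike and orthogonal to ${\bf Y}_i^{\pm}$, the ${\bf Y}_i^0$-coefficient of ${\bf u}(w_i)$ equals $B({\bf u}(w_i),{\bf X}_{w_i}^0)={\rm Mar}_{{\bf u}}(w_i)$; likewise the ${\bf X}_1^0$-coefficient of ${\bf u}(g_1)$ equals $B({\bf u}(g_1),{\bf X}_1^0)={\rm Mar}_{{\bf u}}(g_1)$. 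So the three displayed equations carry no information beyond the three numbers $\zeta_1={\rm Mar}_{{\bf u}}(w_1)$, $\zeta_2={\rm Mar}_{{\bf u}}(w_2)$, $\kappa={\rm Mar}_{{\bf u}}(g_1)$, the coefficients $d_1^{\pm},d_2^{\pm}$ being whatever they turn out to be, and $(\zeta_1,\zeta_2,d_1^-,d_1^+,d_2^-,d_2^+)$ ranges over all of ${\mathbb R}^6$. Thus the lemma reduces to understanding the linear functional $(\zeta_1,\zeta_2,d_1^{\pm},d_2^{\pm})\mapsto\kappa$: if one of its four coefficients of $d_1^{\pm},d_2^{\pm}$ is nonzero, then for arbitrary $(\zeta_1,\zeta_2)$ the value $\kappa$ can be hit, giving the cocycle; if all four vanish, then $\kappa$ is a fixed linear combination of $\zeta_1,\zeta_2$ and one must read off that combination.

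First I would make ${\bf u}(g_1)$ explicit via the cocycle condition applied to $g_1=[w_1,w_2]=w_1w_2w_1^{-1}w_2^{-1}$, using ${\bf u}(h^{-1})=-h^{-1}{\bf u}(h)$, which yields
\[
{\bf u}(g_1)=(I-w_1w_2w_1^{-1}){\bf u}(w_1)+(w_1-g_1){\bf u}(w_2).
\]
Pairing with ${\bf X}_1^0$ and using that $B$ is ${\rm SO}^o(2,1)$-invariant, that $g_1{\bf X}_1^0={\bf X}_1^0$, and that $w_1w_2^{-1}w_1^{-1}=g_3$, the expression collapses to
\[
\kappa=B\bigl({\bf u}(w_1),\,(I-g_3){\bf X}_1^0\bigr)+B\bigl({\bf u}(w_2),\,(w_1^{-1}-I){\bf X}_1^0\bigr).
\]
Because $\langle{\bf Y}_i^-,{\bf Y}_i^+\rangle_{{\mathbb R}}=({\bf X}_{w_i}^0)^{\perp}$, the two coefficients of $d_1^{\pm}$ vanish simultaneously exactly when $(I-g_3){\bf X}_1^0$ is parallel to ${\bf X}_{w_1}^0$, and those of $d_2^{\pm}$ vanish simultaneously exactly when $(I-w_1^{-1}){\bf X}_1^0$ is parallel to ${\bf X}_{w_2}^0$. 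Hence a cocycle realizing arbitrary $(\zeta_1,\zeta_2,\kappa)$ exists unless both parallelisms hold, and in that event $\kappa=A_1\zeta_1+A_2\zeta_2$ with $A_1=B({\bf X}_{w_1}^0,(I-g_3){\bf X}_1^0)$ and $A_2=B({\bf X}_{w_2}^0,(w_1^{-1}-I){\bf X}_1^0)$.

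To finish I would pass to a normal form: put the null frame of $w_1$ in standard position with eigenvalues $\lambda_1,1,\lambda_1^{-1}$, and realize $w_2$ as the conjugate of the analogous diagonal element (eigenvalues $\lambda_2,1,\lambda_2^{-1}$) by the elliptic rotation about the unique intersection point of the two axes through the angle $\theta:=\theta_{w_1}^{w_2}$, so that $B({\bf X}_{w_1}^0,{\bf X}_{w_2}^0)=\cos\theta$ by Lemma \ref{angle lemma}. In these coordinates one computes the unit spacelike eigenvector ${\bf X}_1^0$ of the commutator $g_1$, then the two vectors $(I-g_3){\bf X}_1^0$ and $(I-w_1^{-1}){\bf X}_1^0$; the expected outcome is that each of the four $d_i^{\pm}$-coefficients factors as $\cos\theta$ times a quantity depending only on $\lambda_1,\lambda_2$ that never vanishes, so all four vanish precisely when $\theta=\pi/2$. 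In that case a direct evaluation of $A_1,A_2$ in the $\theta=\pi/2$ normal form is expected to give $A_1=K_{\frac{\pi}{2}}(1+\lambda_1)(-1+\lambda_2)$ and $A_2=K_{\frac{\pi}{2}}(-1+\lambda_1)(1+\lambda_2)$, i.e. the stated relation. The main obstacle is purely computational: running the eigenvector computation for the commutator $g_1$ and tracking the Euclidean normalizations of the null frames cleanly enough to see the clean $\cos\theta$-factorization and to pin down the constant $K_{\frac{\pi}{2}}$. The two reductions above — rewriting $\kappa$ as a sum of two $B$-pairings, and recasting the obstruction as two parallelism conditions — are what make this tractable, since only the single vector ${\bf X}_1^0$, not the whole matrix $g_1$, has to be produced.
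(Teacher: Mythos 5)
Your proposal takes essentially the same route as the paper: the identical cocycle identity ${\bf u}(g_1)=(Id-g_3^{-1}){\bf u}(w_1)+(w_1-g_1){\bf u}(w_2)$ (your $w_1w_2w_1^{-1}$ is exactly $g_3^{-1}$), the same pairing with ${\bf X}_1^0$ using that the ${\bf Y}_i^0$-coefficients are the Margulis invariants, and the same normalization (standard null frame for $w_1$, $\theta$-rotated frame for $w_2$), reducing the lemma to the linear functional $(\zeta_1,\zeta_2,d_i^{\pm})\mapsto\kappa$. The explicit evaluation you leave as ``expected'' is precisely the direct calculation the paper carries out and records in \eqref{equation}, which confirms your predicted structure: each $d_i^{\pm}$-coefficient is $\cos\theta$ times a nonvanishing expression in $\lambda_1,\lambda_2$, so arbitrary $(\zeta_1,\zeta_2,\kappa)$ is attainable iff $\theta_{w_1}^{w_2}\neq\pi/2$, while at $\theta=\pi/2$ the stated relation with $K_{\frac{\pi}{2}}$ is forced.
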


\begin{proof}
By the cocycle condition and a direct calculation,  
\begin{eqnarray}
\label{ug1}
{\bf u}(g_1) = (Id- g_3^{-1}) {\bf u}(w_1) + (w_1 - g_1){\bf u}(w_2)
\end{eqnarray}
holds for any cocycle ${\bf u}$ on the $S_{1,1}$.

If ${\bf u}(w_1) = \zeta_1 {\bf Y}_1^0 + a {\bf Y}_1^- + b {\bf Y}_1^+,
{\bf u}(w_2) = \zeta_2 {\bf Y}_2^0 + c {\bf Y}_2^- + d {\bf Y}_2^+$, then we obtain a representation of ${\bf u}(g_1)$ by $\eqref{ug1}$. An inner product with ${\bf X}_1^0$ produces a Margulis invariant of $g_1$.
\begin{eqnarray*}
{\rm Mar}_{\bf u} (g_1) &=& \zeta_1 B( (Id-g_3^{-1}) {\bf Y}_1^0, {\bf X}_1^0 ) + \zeta_2 B( (w_1 - g_1) {\bf Y}_2^0, {\bf X}_1^0 ) \\
& & + a B( (Id-g_3^{-1}) {\bf Y}_1^-, {\bf X}_1^0 ) + b B( (Id-g_3^{-1}) {\bf Y}_1^+, {\bf X}_1^0 ) \\
& & + c B( (w_1 - g_1 ) {\bf Y}_2^-, {\bf X}_1^0 ) + d B( (w_1 - g_1 ) {\bf Y}_2^+, {\bf X}_1^0 ).
\end{eqnarray*}

\begin{flushleft}
{\bf Claim.} The following hold:
\end{flushleft}
\begin{itemize}
\item[(i)] ${\rm Mar_{{\bf u}}}(g_1)$ does not depend on $a, b, c, d$ at all if $\theta_{w_1}^{w_2} = \pi / 2$,
namely the coefficients of $a,b,c,d$ are zero,
\item[(ii)] ${\rm Mar_{{\bf u}}}(g_1)$ does linearly on $a, b , c, d, \zeta_1, \zeta_2$ (otherwise).
\end{itemize}
\begin{proof}
By conjugation, we may set ${\bf Y}_1^0:=[1, 0, 0], {\bf Y}_1^{\pm} := \frac{1}{\sqrt{2}} [0, \mp 1, 1]$, and also ${\bf Y}_2^{\Box}:=$
[$\theta${\rm -rotation around $z$-axis} of ${\bf Y}_1^{\Box}$ ]  where $\theta \in (0, \pi)$, and $\Box \in \{0, +, - \}$.
Their matrices can be represented as 
\begin{eqnarray*}
w_1 = [{\bf Y}_1^0, {\bf Y}_1^-, {\bf Y}_1^+]
\left[
\begin{array}{ccc}
1 & & \\
 & \lambda_1 & \\
  & & \lambda_1^{-1} \\
  \end{array}
  \right], 
w_2 = [{\bf Y}_2^0, {\bf Y}_2^-, {\bf Y}_2^+]
\left[
\begin{array}{ccc}
1 & & \\
 & \lambda_2 & \\
  & & \lambda_2^{-1} \\
 \end{array}
 \right] .  
\end{eqnarray*}
We can find a constant ${\rm K} (\neq 0) \in {\mathbb R}$ which depends only on the hyperbolic structure of $S_{1,1}$ and satisfies:
\begin{eqnarray}
\label{equation}
\\
\frac{{\rm Mar}_{{\bf u}}(g_1)}{{\rm K}} &=& \sin{\theta} \{ (1 + \lambda_1)(-1 + \lambda_2) \zeta_1 +  (1+\lambda_2)(-1+\lambda_1) \zeta_2 \} \nonumber \\
&-& \sqrt{2} \cos{\theta} \{ (-1+\lambda_2)(a - \lambda_1 b ) - (-1+\lambda_1)(c- \lambda_2 d) \} \nonumber .
 \end{eqnarray}
Note that the constant ${\rm K}$ is not zero regardless of hyperbolic structures of the once-holed torus.
Thus we can check our claim.
\end{proof}
In order to prove the lemma, we have only to find a solution $(a,b,c,d)$ of an equation $\kappa = {\rm Mar}_{{\bf u}}(g_1)$
for the given $(\zeta_1, \zeta_2, \kappa) \in {\mathbb R}^3$. However it is easily checked.
\end{proof}

From now on, we suppose that $\theta_{w_1}^{w_2} \neq \pi / 2$. 
In order to show Proposition \ref{once-holed torus}, we consider a normalization of the representation of cocycles up to translation. At first, we prove two lemmas.

\begin{lemma}
\label{trans v}
For the cocycle ${\bf u}$ in Lemma $\ref{existence}$, there exists the unique vector ${\bf v} \in {\mathbb R}_1^2$ such that 
\begin{eqnarray*}
({\bf u}+\delta_{{\bf v}}) (w_1) &=& \zeta_1 {\bf Y}_1^0 + {\bf f}_1^- + {\bf f}_1^+ , \\
({\bf u}+\delta_{{\bf v}}) (w_2) &=& \zeta_2 {\bf Y}_2^0 + {\bf f}_2^-  + {\bf 0}, \\
({\bf u}+\delta_{{\bf v}}) (g_1) &=& \kappa {\bf X}_1^0 + {\bf 0} + {\bf 0},
\end{eqnarray*}
where ${\bf f}_1^{\pm} \in {\mathbb R}{\bf Y}_1^{\pm}, {\bf f}_2^- \in {\mathbb R} {\bf Y}_2^-$ and they depend on $(\zeta_1, \zeta_2, \kappa, d_1^-, d_1^+, d_2^-, d_2^+) \in {\mathbb R}^6$.

\end{lemma}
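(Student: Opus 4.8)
The plan is to convert the three normalization requirements into a system of three linear equations for ${\bf v}\in{\mathbb R}_1^2$ and then to solve it; the solvability of this system is precisely where the standing hypothesis $\theta_{w_1}^{w_2}\neq\pi/2$ enters.

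First I would dispose of the bookkeeping. For every ${\bf v}$ and every hyperbolic $h$ we have $\delta_{{\bf v}}(h)={\bf v}-h{\bf v}\in({\bf X}_h^0)^{\perp}$, and since ${\bf X}_h^0$ is a unit spacelike vector orthogonal to both ${\bf X}_h^{\pm}$, the ${\bf X}_h^0$-coefficient of any vector equals its $B$-pairing with ${\bf X}_h^0$; hence replacing ${\bf u}$ by ${\bf u}+\delta_{{\bf v}}$ never changes the ${\bf X}_h^0$-coefficient of ${\bf u}(h)$. Applied to $h=w_1,w_2,g_1$ this shows that $\zeta_1,\zeta_2,\kappa$ are automatically preserved, and also that the prescribed shape of $({\bf u}+\delta_{{\bf v}})(w_1)$ imposes no condition on ${\bf v}$ at all, because ${\bf f}_1^{\pm}$ are permitted to be arbitrary multiples of ${\bf Y}_1^{\pm}$. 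So the substantive requirements are exactly three scalar equations in the three unknowns contained in ${\bf v}$: the ${\bf Y}_2^+$-coefficient of $({\bf u}+\delta_{{\bf v}})(w_2)$ must vanish, and the ${\bf X}_1^-$- and ${\bf X}_1^+$-coefficients of $({\bf u}+\delta_{{\bf v}})(g_1)$ must both vanish.

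I would then solve the two $g_1$-equations in the null frame $\{{\bf X}_1^0,{\bf X}_1^-,{\bf X}_1^+\}$ of $g_1$. Writing ${\bf v}=v^0{\bf X}_1^0+v^-{\bf X}_1^-+v^+{\bf X}_1^+$ and using that the hyperbolic $g_1$ acts diagonally on its null frame, with eigenvalue $1$ on ${\bf X}_1^0$ and eigenvalues different from $1$ on ${\bf X}_1^{\pm}$, one gets $\delta_{{\bf v}}(g_1)=\nu^-v^-{\bf X}_1^-+\nu^+v^+{\bf X}_1^+$ with $\nu^{\pm}\neq 0$. Thus $({\bf u}+\delta_{{\bf v}})(g_1)=\kappa{\bf X}_1^0$ forces $v^{\pm}=-c_1^{\pm}/\nu^{\pm}$, where $c_1^{\pm}$ are the null coefficients of ${\bf u}(g_1)$; by $\eqref{ug1}$ these are explicit linear functions of the data of ${\bf u}(w_1)$ and ${\bf u}(w_2)$. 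This fixes $v^{\pm}$ uniquely and leaves $v^0$ free, and since ${\bf X}_1^0$ is fixed by $g_1$ the value of $v^0$ does not affect $\delta_{{\bf v}}(g_1)$, so the two $g_1$-equations remain satisfied for every $v^0$.

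Finally I would determine $v^0$ from the last equation, the vanishing of the ${\bf Y}_2^+$-coefficient of $({\bf u}+\delta_{{\bf v}})(w_2)={\bf u}(w_2)+{\bf v}-w_2{\bf v}$. Expanding ${\bf X}_1^0=\alpha{\bf Y}_2^0+\beta{\bf Y}_2^-+\gamma{\bf Y}_2^+$ and using that $w_2$ acts diagonally in its own null frame with eigenvalue different from $1$ on ${\bf Y}_2^+$, the ${\bf Y}_2^+$-coefficient of $({\bf u}+\delta_{{\bf v}})(w_2)$ is an affine function of $v^0$ whose slope is a nonzero multiple of $\gamma$; so a unique $v^0$ solving it exists as soon as $\gamma\neq 0$, i.e. as soon as ${\bf X}_1^0$ has a nonzero ${\bf Y}_2^+$-component. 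I expect this last point to be the only step requiring real work, and I would settle it by the explicit conjugation normalization already set up for Lemma~\ref{existence} (with ${\bf Y}_1^0=[1,0,0]$, ${\bf Y}_1^{\pm}=\frac{1}{\sqrt{2}}[0,\mp1,1]$ and ${\bf Y}_2^{\Box}$ the $\theta$-rotation about the $z$-axis of ${\bf Y}_1^{\Box}$): in those coordinates $\gamma$ is a short computation and is nonzero exactly when $\cos\theta_{w_1}^{w_2}\neq 0$, in agreement with the factor $\cos\theta$ that multiplies the null data $a,b,c,d$ in $\eqref{equation}$. This is precisely where $\theta_{w_1}^{w_2}\neq\pi/2$ is used. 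Carrying out the three steps in succession yields $v^{\pm}$ and then $v^0$, each uniquely, so ${\bf v}$ is unique; and since at every step the unknown was given by the explicit formulas above, ${\bf v}$, and therefore ${\bf f}_1^{\pm}$ and ${\bf f}_2^-$, depends only on $(\zeta_1,\zeta_2,\kappa,d_1^-,d_1^+,d_2^-,d_2^+)$.
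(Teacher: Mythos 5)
Your reduction is essentially the paper's own construction: coboundaries never change the ${\bf X}_h^0$-coefficients, the $w_1$-line imposes no condition, the two $g_1$-conditions determine the ${\bf X}_1^{\pm}$-components of ${\bf v}$ uniquely in the null frame of $g_1$ (this is the second summand of the paper's explicit ${\bf v}$), and the remaining freedom $v^0$ along ${\bf X}_1^0$ is then tuned to kill the ${\bf Y}_2^+$-part of ${\bf u}(w_2)$, which works iff $\delta_{{\bf X}_1^0}(w_2)$ has a nonzero ${\bf Y}_2^+$-component, i.e.\ iff $B({\bf X}_1^0,{\bf Y}_2^-)\neq 0$. Up to that point the argument is correct and triangular, and it also yields the uniqueness of ${\bf v}$.

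The gap is in your justification of the one nontrivial inequality $B({\bf X}_1^0,{\bf Y}_2^-)\neq 0$ (your $\gamma\neq 0$). You assert, without carrying it out, that the coordinate computation would show $\gamma\neq 0$ \emph{exactly when} $\cos\theta\neq 0$, and you locate the hypothesis $\theta_{w_1}^{w_2}\neq\pi/2$ here. That equivalence is false, and the hypothesis is not what this lemma uses. Indeed $B({\bf X}_1^0,{\bf Y}_2^-)=0$ would force ${\bf Y}_2^-$ to lie in $({\bf X}_1^0)^{\perp}=\langle {\bf X}_1^-,{\bf X}_1^+\rangle_{\mathbb R}$, hence (being null) to be proportional to ${\bf X}_1^-$ or ${\bf X}_1^+$, i.e.\ the axes of $w_2$ and of $g_1=[w_1,w_2]$ would share an endpoint at infinity. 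This is impossible for any hyperbolic structure, since $\langle w_1,w_2\rangle$ is a discrete free group and $[w_1,w_2]$ is not a power of $w_2$; in particular it is impossible also when $\theta=\pi/2$ (a direct check with $w_1$ fixing $0,\infty$ and $w_2$ fixing $\pm 1$ shows the commutator fixes neither $\pm 1$). So the computation you defer to would not produce the claimed $\cos\theta$ factor; the $\cos\theta$ appearing in \eqref{equation} governs the dependence of ${\rm Mar}_{\bf u}(g_1)$ on $a,b,c,d$ in Lemma \ref{existence} and is a different phenomenon. The step is repaired by the paper's argument: ${\bf X}_1^0$ is the spacelike eigenvector of $w_1w_2w_1^{-1}w_2^{-1}$, whose null directions are distinct from those of $w_2$, whence $B(\delta_{{\bf X}_1^0}(w_2),{\bf Y}_2^-)=(1-\lambda_2^{-1})B({\bf X}_1^0,{\bf Y}_2^-)\neq 0$; with that substitution your proof goes through, and without it the crucial step is unsupported.
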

\begin{proof}
Explicitly, we set 
$$
{\bf v}:= s {\bf X}_1^0 + \frac{(c_1^- {\bf X}_1^- + c_1^+ {\bf X}_1^+) - g_1^{-1} (c_1^- {\bf X}_1^- + c_1^+ {\bf X}_1^+)}{(1 - \mu_1)(1-\mu_1^{-1})},
$$
where $(0<)\mu_1< \mu^{-1}$ are eigenvalues of $g_1$ and $s$ is the suitable real number. 
We will show that this vector satisfies this lemma.
It is easily checked that $({\bf u} + \delta_{{\bf v}})(g_1) = \kappa {\bf X}_1^0$.

We have only to show that $\delta_{{\bf X}_1^0}(w_2)$ has the direction of ${\bf Y}_2^+$. Then, for the suitable number $s$, $\delta_{{\bf v}}(w_2)$ deletes the direction of ${\bf Y}_2^+$ in the representation of the cocycle of $w_2$. In order to find the direction of ${\bf Y}_2^+$ in $\delta_{{\bf X}_1^0}(w_2)$, we show that $B(\delta_{{\bf X}_1^0}(w_2), {\bf Y}^-_2) \neq 0$ as follows:
\begin{eqnarray*}
B(\delta_{{\bf X}_1^0}(w_2), {\bf Y}^-_2) &=& B({\bf X}_1^0 - w_2 {\bf X}_1^0, {\bf Y}_2^-) \\
&=&  (1 - \lambda_2^{-1}) B({\bf X}_1^0, {\bf Y}_2^-).
\end{eqnarray*}
Note that $``1 - \lambda_2^{-1}"$ is not zero.
Since ${\bf X}_1^0$ is the principal eigenvector of $w_1 w_2 w_1^{-1} w_2^{-1}$,
the null vectors ${\bf X}_1^+$ and ${\bf X}_1^-$ are near ${\bf Y}_1^+$ and ${\bf Y}_2^+$ respectively.
So the null vector ${\bf Y}_2^-$ is not contained the orthogonal plane of ${\bf X}_1^0$. Thus $B({\bf X}_1^0, {\bf Y}_2^-) \neq 0$ holds. Therefore, $B(\delta_{{\bf X}_1^0}(w_2), {\bf Y}^-_2) \neq 0$ as desired.
\end{proof}

\begin{lemma}
\label{uT}
For the cocycle in Lemma $\ref{trans v}$, the vectors ${\bf f}_1^{\pm}, {\bf f}_2^-$ do not depend on $c_1^{\pm}, d_1^{\pm}$ and $d_2^{\pm}$ for the fixed triple $(\zeta_1, \zeta_2, \kappa)$.
\end{lemma}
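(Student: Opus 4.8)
The plan is to trace carefully through the explicit construction of the translating vector $\mathbf v$ in Lemma \ref{trans v} and show that the resulting $\mathbf Y$-components are functions of $(\zeta_1, \zeta_2, \kappa)$ alone. First I would recall that, by Lemma \ref{existence} (and equation \eqref{equation}), once the triple $(\zeta_1, \zeta_2, \kappa)$ is fixed, the coefficients $a, b, c, d$ appearing in a cocycle representative are constrained by a single linear relation; they are \emph{not} free. Equivalently, after applying the coboundary $\delta_{\mathbf v}$ of Lemma \ref{trans v}, the remaining data of the cocycle class $[\mathbf u]$ is finite-dimensional and, by Lemma \ref{inject}, is completely determined by the Margulis invariants. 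So the strategy is: the post-normalization cocycle is a \emph{representative} of a cohomology class, the class depends only on $(\zeta_1, \zeta_2, \kappa)$ up to the remaining ambiguity in $\mathbf v$, and I must check that the particular normalization pins down that ambiguity entirely.

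The key steps, in order, are as follows. (1) Start with two cocycle representatives $\mathbf u, \mathbf u'$ of the \emph{same} class, both in the form of Lemma \ref{existence} with the \emph{same} triple $(\zeta_1, \zeta_2, \kappa)$ but possibly different $c_1^\pm, d_1^\pm, d_2^\pm$; since they share the same $\mathbf Y_1^0$- and $\mathbf Y_2^0$-components and the same $\mathbf X_1^0$-component on $w_1, w_2, g_1$, they have equal Margulis invariants on $g_1$, on $w_1$ ($=\zeta_1$), on $w_2$ ($=\zeta_2$), and, using the cocycle relation \eqref{ug1} together with $\theta_{w_1}^{w_2}\neq\pi/2$, on every element of $G_{1,1}$ — hence $[\mathbf u]=[\mathbf u']$ by Lemma \ref{inject}, i.e. $\mathbf u' = \mathbf u + \delta_{\mathbf w}$ for some $\mathbf w$. (2) Apply Lemma \ref{trans v} to each: we get vectors $\mathbf v, \mathbf v'$ so that $\mathbf u + \delta_{\mathbf v}$ and $\mathbf u' + \delta_{\mathbf v'}$ both have the normalized shape. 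Then $\mathbf u' + \delta_{\mathbf v'} = (\mathbf u + \delta_{\mathbf v}) + \delta_{\mathbf w + \mathbf v' - \mathbf v}$, so it suffices to show that any coboundary $\delta_{\mathbf z}$ preserving the normalized shape (zero $\mathbf Y_2^+$-component on $w_2$ and zero $\mathbf X_1^\pm$-components on $g_1$) is the zero coboundary, i.e. $\mathbf z$ is $G_{1,1}$-fixed, hence $\mathbf z = \mathbf 0$. (3) Pin down $\mathbf z$: the condition $\delta_{\mathbf z}(g_1) \in \mathbb R\,\mathbf X_1^0$ together with $\delta_{\mathbf z}(g_1) = \mathbf z - g_1\mathbf z \in (\mathbf X_{g_1}^0)^\perp = \langle \mathbf X_1^-, \mathbf X_1^+\rangle$ forces $\delta_{\mathbf z}(g_1) = \mathbf 0$, so $\mathbf z$ lies on the axis of $g_1$, i.e. $\mathbf z \in \mathbb R\,\mathbf X_1^0$. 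Then the remaining condition that $\delta_{\mathbf z}(w_2)$ has no $\mathbf Y_2^+$-component is exactly the computation already carried out in the proof of Lemma \ref{trans v}: $B(\delta_{\mathbf X_1^0}(w_2), \mathbf Y_2^-) = (1 - \lambda_2^{-1}) B(\mathbf X_1^0, \mathbf Y_2^-) \neq 0$, which shows $\delta_{\mathbf z}(w_2) = s\,\delta_{\mathbf X_1^0}(w_2)$ has a nonzero $\mathbf Y_2^+$-component unless $s = 0$. Hence $\mathbf z = \mathbf 0$, $\mathbf v' = \mathbf v$ (by uniqueness in Lemma \ref{trans v} one could also argue directly), and the normalized components of $\mathbf u'$ equal those of $\mathbf u$; so $\mathbf f_1^\pm, \mathbf f_2^-$ depend only on $(\zeta_1, \zeta_2, \kappa)$.

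An alternative, more computational route is to simply substitute the explicit $\mathbf v$ from the proof of Lemma \ref{trans v} into $({\bf u}+\delta_{\bf v})(w_1)$ and $({\bf u}+\delta_{\bf v})(w_2)$, expand in the null frames, and use \eqref{equation} (which expresses the one linear relation among $a,b,c,d,\zeta_1,\zeta_2$ imposed by fixing $\kappa$) to eliminate the dependence on $c_1^\pm, d_1^\pm, d_2^\pm$; I would keep this in reserve as a cross-check rather than as the main argument, since the cohomological argument is cleaner.

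The main obstacle I anticipate is step (3): verifying that the normalization conditions rigidify the translating vector completely, i.e. that no nonzero $G_{1,1}$-translation preserves the prescribed vanishing pattern. The $g_1$-condition is straightforward, but one must be careful that forcing $\mathbf z \in \mathbb R\,\mathbf X_1^0$ and then imposing the $w_2$-condition genuinely kills the last parameter; this is where the hypothesis that $\mathbf X_1^0$ is the principal eigenvector of the commutator $[w_1, w_2]$ — so that $\mathbf Y_2^-$ is not orthogonal to $\mathbf X_1^0$ — is essential, exactly as in Lemma \ref{trans v}. One should also double-check that there is no hidden constraint forcing the $\mathbf Y_2^+$-component of $({\bf u}+\delta_{\bf v})(w_2)$ to vanish for \emph{spurious} reasons that would change the count; but the computation $B(\delta_{\mathbf X_1^0}(w_2), \mathbf Y_2^-)\neq 0$ already rules that out.
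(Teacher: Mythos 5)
Your steps (2)--(3) are fine and in fact make explicit a point the paper treats tersely: the normalization of Lemma~\ref{trans v} is rigid, because a coboundary $\delta_{\bf z}$ preserving the prescribed vanishing pattern must satisfy $\delta_{\bf z}(g_1)\in{\mathbb R}{\bf X}_1^0\cap({\bf X}_1^0)^{\perp}=\{{\bf 0}\}$, hence ${\bf z}\in{\mathbb R}{\bf X}_1^0$, and then the computation $B(\delta_{{\bf X}_1^0}(w_2),{\bf Y}_2^-)\neq 0$ forces ${\bf z}={\bf 0}$. Keep that. But these steps only reduce the lemma to the statement that the triple $(\zeta_1,\zeta_2,\kappa)$ determines the cohomology class, and that statement --- the actual substance of Lemma~\ref{uT} --- is not proved in your step (1). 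You assert that since ${\bf u}$ and ${\bf u}'$ have equal Margulis invariants on $w_1$, $w_2$ and $g_1$, they have equal Margulis invariants on \emph{every} element of $G_{1,1}$ ``using the cocycle relation \eqref{ug1} together with $\theta_{w_1}^{w_2}\neq\pi/2$,'' and then invoke Lemma~\ref{inject}. Equation~\eqref{ug1} only expresses ${\bf u}(g_1)$ in terms of ${\bf u}(w_1)$ and ${\bf u}(w_2)$; it gives no control whatsoever over ${\rm Mar}_{\bf u}(h)$ for a general word $h$ (already for $h=w_1w_2$), and equality of three invariants does not propagate by itself. This is precisely where the paper does real work: it quotes Lemma~6.1 of \cite{cdg2010} (the triple $(\zeta_1,\zeta_2,\zeta_3)$ with $\zeta_3={\rm Mar}_{\bf u}(w_1w_2)$ determines the class), derives \eqref{equation2}, and eliminates $a,b,c,d$ against \eqref{equation} to obtain \eqref{kappa and zeta3}, where the hypothesis $\theta_{w_1}^{w_2}\neq\pi/2$ enters because the coefficient $\cos\theta$ must be nonzero for $\kappa$ to determine $\zeta_3$. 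Your proposal supplies no substitute for this step. (As written, step (1) is also circular: you begin by taking ${\bf u},{\bf u}'$ ``of the same class,'' in which case there is nothing for Lemma~\ref{inject} to do; what you need is exactly ``same triple $\Rightarrow$ same class.'')

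There is a repair available within your framework, but you would have to state it: the map $[{\bf u}]\mapsto({\rm Mar}_{\bf u}(w_1),{\rm Mar}_{\bf u}(w_2),{\rm Mar}_{\bf u}(g_1))$ is well defined and linear on ${\rm H}^1(G_{1,1},{\mathbb R}_1^2)$ (Margulis invariants are linear in the cocycle and vanish on coboundaries), this space has dimension $6g+3b-6=3$, and Lemma~\ref{existence} gives surjectivity when $\theta_{w_1}^{w_2}\neq\pi/2$; a surjective linear map between three-dimensional spaces is injective, which yields ``same triple $\Rightarrow$ same class.'' Either this dimension-count argument or the paper's explicit elimination leading to \eqref{kappa and zeta3} must appear; without one of them, the proof has a genuine gap at its central point. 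Your ``computational route in reserve'' would also work in principle, but since you do not carry it out, it cannot close the gap either.
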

\begin{proof}
From Lemma $6.1$ in \cite{cdg2010}, a triple $(\zeta_1, \zeta_2, \zeta_3) \in {\mathbb R}^3$ with $\zeta_3 :={\rm Mar}_{{\bf u}}(w_1 w_2)$ determines a cohomology class of ${\bf u}$. 
Under the condition of the conjugation as in Lemma \ref{existence}, the following also holds:
\begin{eqnarray}
\label{equation2}
\\
\frac{\zeta_3}{{\rm K}'} &=& -\{ (1+\lambda_1)(-1+\lambda_2) \cot{\theta} + (-1+\lambda_1) (1+ \lambda_2) \csc{\theta} \} \zeta_1 \nonumber \\
&-&  \{ ( -1 + \lambda_1)( 1 + \lambda_2) \cot{\theta} + ( 1+\lambda_1) (-1+ \lambda_2) \csc{\theta} \} \zeta_2 \nonumber \\
&-& \sqrt{2}  \{ (-1+\lambda_2)(a - \lambda_1 b ) - (-1+\lambda_1)(c- \lambda_2 d) \} \nonumber ,
\end{eqnarray}
where the constant ${\rm K}'$ also depends only on the hyperbolic structure on $S_{1,1}$, and ${\rm K}' \neq 0$ even if $\theta_{w_1}^{w_2} = \pi /2$.
When two equations \eqref{equation} and \eqref{equation2} are combined to eliminate the four terms $a,b,c,d$, we can obtain an equation between $\kappa$ and $\zeta_3$ obviously with $\zeta_1, \zeta_2$ fixed:
\begin{equation}
\label{kappa and zeta3}
\frac{\kappa}{{\rm K}} -\cos{\theta} \cdot \frac{\zeta_3}{{\rm K}'} = \frac{2(\lambda_1 \lambda_2 -1)}{\sin{\theta}} \cdot (\zeta_1 + \zeta_2) .
\end{equation}
Therefore $\kappa$ determines $\zeta_3$ and vice versa.
Thus the triple $(\zeta_1, \zeta_2, \kappa)$ determines a cohomology class of ${\bf u}$.

Consider the cocycle in Lemma \ref{trans v}. The vectors ${\bf f}_1^{\pm}, {\bf f}_1^-$ are noticed not to depend on $c_1^{\pm}, d_1^{\pm}$ and $d_2^{\pm}$, since its cohomology class is already determined.
\end{proof}

Here we denote the cocycle in Lemma \ref{uT} by ${\bf u}_T$. Namely, ${\bf u}_T$ can be written by
\begin{eqnarray*}
{\bf u}_T (w_1) &:=& \zeta_1 {\bf Y}_1^0 + {\bf f}_1^- (\zeta_1, \zeta_2, \kappa) + {\bf f}_1^+ (\zeta_1, \zeta_2, \kappa), \\
{\bf u}_T (w_2) &:=& \zeta_2 {\bf Y}_2^0 + {\bf f}_2^- (\zeta_1, \zeta_2, \kappa) + {\bf 0}, \\
{\bf u}_T (g_1) &:=& \kappa {\bf X}_1^0 + {\bf 0} + {\bf 0},
\end{eqnarray*}
where ${\bf f}_1^{\pm}( \zeta_1, \zeta_2 , \kappa) \in {\mathbb R}{\bf Y}_1^{\pm}$ and ${\bf f}_2^{-}( \zeta_1, \zeta_2 , \kappa) \in {\mathbb R}{\bf Y}_2^{-}$.
\begin{definition}
We define a map $\Phi_T$ by
$${\mathbb R}^3 \ni (\zeta_1, \zeta_2, \kappa) \mapsto {\bf u}_T \in {\rm Z}^1(G_{1,1}, {\mathbb R}_1^2).$$
\end{definition}
Note that the $\Phi_t$ is a linear map. Namely, ${\bf f}_1^{\pm}, {\bf f}_2^- : {\mathbb R}^3 \to {\mathbb R}_1^2$ are linear.

\begin{proposition}
\label{once-holed torus}
For every triple $(\zeta_1, \zeta_2, \kappa) \in {\mathbb R}^3$, there exists a cocycle ${\bf u}$ on $G_{1,1}$
such that ${\rm Mar}_{{\bf u}}(g_1) = \zeta_1, {\rm Mar}_{{\bf u}}(w_2) = \zeta_2, {\rm Mar}_{{\bf u}}(w_1) = \kappa$. 
Furthermore this correspondence is extended to a linear isomorphism from ${\mathbb R}^3$ to ${\rm H}^1(G_{1,1}, {\mathbb R}_1^2)$.
This result, however, does not hold in the case where $\theta_{w_1}^{w_2} \neq \frac{\pi}{2}.$
\end{proposition}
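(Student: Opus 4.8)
The strategy is to obtain the proposition by assembling the three preceding lemmas with the linear map $\Phi_T$, so that only linear-algebraic bookkeeping remains. First I would match the proposition's three target values ${\rm Mar}_{\mathbf u}(g_1)$, ${\rm Mar}_{\mathbf u}(w_2)$, ${\rm Mar}_{\mathbf u}(w_1)$ with the coefficients controlled by the normalized cocycle $\mathbf u_T$: reading off $\mathbf u_T(w_1)$, $\mathbf u_T(w_2)$ and $\mathbf u_T(g_1)$ shows that the coefficients of $\mathbf Y_1^0$, $\mathbf Y_2^0$ and $\mathbf X_1^0$ are exactly ${\rm Mar}_{\mathbf u}(w_1)$, ${\rm Mar}_{\mathbf u}(w_2)$ and ${\rm Mar}_{\mathbf u}(g_1)$. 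Hence the proposition's correspondence is the composite of $\Phi_T$ with the class projection $Z^1(G_{1,1},\mathbb R_1^2)\to H^1(G_{1,1},\mathbb R_1^2)$, up to the permutation interchanging the first and third entries. The existence half then follows directly from Lemma \ref{existence}: under the running hypothesis $\theta_{w_1}^{w_2}\neq\pi/2$ the factor $\cos\theta$ in \eqref{equation} does not annihilate the terms in $a,b,c,d$, so the equation $\kappa={\rm Mar}_{\mathbf u}(g_1)$ is solvable in these auxiliary coefficients for every prescribed triple.

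Next I would show that the induced linear map $\mathbb R^3\to H^1(G_{1,1},\mathbb R_1^2)$ is a bijection. Injectivity is immediate because the Margulis invariant descends to $H^1(G_{1,1},\mathbb R_1^2)$ (coboundaries $\delta_{\mathbf v}(h)=\mathbf v-h\mathbf v$ lie in $(\mathbf X_h^0)^\perp$, hence carry zero Margulis invariant), and the triple consists precisely of three of these class invariants; two triples with the same image therefore coincide, in the spirit of Lemma \ref{inject}. For surjectivity I would carry out a dimension count: since $G_{1,1}$ is free of rank two a cocycle is freely determined by its values on $w_1$ and $w_2$, so $\dim Z^1(G_{1,1},\mathbb R_1^2)=6$; the hyperbolic action has no nonzero fixed vector, so $\mathbf v\mapsto\delta_{\mathbf v}$ is injective and $\dim B^1(G_{1,1},\mathbb R_1^2)=3$, whence $\dim H^1(G_{1,1},\mathbb R_1^2)=3$. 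Equivalently one may invoke the fact, recalled in the proof of Lemma \ref{uT} from Lemma $6.1$ of \cite{cdg2010}, that $(\zeta_1,\zeta_2,\zeta_3)$ with $\zeta_3={\rm Mar}_{\mathbf u}(w_1w_2)$ determines the class, together with the exchange \eqref{kappa and zeta3} between $\kappa$ and $\zeta_3$. A linear injection of $\mathbb R^3$ into a three-dimensional space is onto, which yields the asserted isomorphism.

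Finally, to address the closing assertion I would analyze the constraint branch of Lemma \ref{existence}, in which the exceptional angular configuration is singled out. There the coefficients of $a,b,c,d$ in \eqref{equation} all vanish, so $\kappa$ is pinned to a fixed linear combination of $\zeta_1$ and $\zeta_2$, namely the relation already displayed in Lemma \ref{existence}. The presence of such a relation exhibits a linear dependence among the three Margulis invariants, so the image of the parametrization collapses to a plane and the correspondence cannot be the linear isomorphism of the first part. I expect the real obstacle to lie not in any single computation but in the assembly: one must verify that injectivity together with $\dim H^1(G_{1,1},\mathbb R_1^2)=3$ genuinely forces surjectivity, and that in the exceptional configuration the vanishing of the $a,b,c,d$-coefficients is exactly what produces the rank drop, so that the degeneracy is a loss of precisely one dimension and no more.
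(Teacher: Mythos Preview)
Your proposal is correct and matches the paper's approach closely. Both arguments assemble $\overline{\Phi_T}$ from the three lemmas and check that it is a linear isomorphism; your injectivity argument (via the fact that Margulis invariants are class functions) is actually a bit cleaner than the paper's appeal to Lemma~\ref{inject}, and for surjectivity you lead with a dimension count while the paper leads with the change of coordinates \eqref{kappa and zeta3}, which you also mention as an alternative. Your explicit discussion of the degenerate angular configuration is more detailed than what the paper provides in the proof itself.
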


\begin{proof}
The map $\Phi_T$ naturally induces a well-defined map 
$$
\overline{\Phi_T} : {\mathbb R}^3 \ni (\zeta_1, \zeta_2, \kappa) \mapsto [\Phi_T (\zeta_1, \zeta_2, \kappa)] \in {\rm H}^1(G_{1,1}, {\mathbb R}_1^2).
$$
One can easily check that the map is a linear isomorphism; Indeed, the injectivity of  $\overline{\Phi_T}$ follows from Lemma \ref{inject}. The surjectivity follows from the equation $\eqref{kappa and zeta3}$ in Lemma \ref{uT}, since it indicates the equivalence between $\kappa$ and $\zeta_3$.
\end{proof}

This Proposition \ref{once-holed torus} is for the special generator $w_1$ and $w_2$. In fact, we will use this statement to prove Theorem \ref{deformation space}. However, the same discussion in the proof of Proposition \ref{once-holed torus} shows:
\begin{corollary}
\label{any set of generators}
For any generators $\omega_1, \omega_2$ of $G_{1,1}$, a triple $$({\rm Mar}_{{\bf u}}(\omega_1), {\rm Mar}_{{\bf u}}(\omega_2), {\rm Mar}_{{\bf u}}([\omega_1, \omega_2])) \in {\mathbb R}^3$$ canonically determines a unique cocycle up to translation, if $\theta_{\omega_1}^{\omega_2} \neq \pi / 2$.
\end{corollary}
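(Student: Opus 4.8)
The plan is to observe that the proof of Proposition \ref{once-holed torus} uses nothing about the particular generators $w_1,w_2$ beyond the facts that they freely generate $G_{1,1}$, that $[w_1,w_2]$ represents the boundary curve, and that $\theta_{w_1}^{w_2}\neq\pi/2$, and then to rerun it verbatim with an arbitrary generating pair $\omega_1,\omega_2$ with $\theta_{\omega_1}^{\omega_2}\neq\pi/2$ in place of $w_1,w_2$. First I would realize $\omega_1,\omega_2$ by simple closed geodesics on $S_{1,1}$ meeting in a single point; this is standard for the once-holed torus, where an essential simple closed curve corresponds to a primitive conjugacy class and two such curves meet once exactly when the corresponding primitive elements form a basis of $G_{1,1}\cong F_2$. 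In particular the angle $\theta_{\omega_1}^{\omega_2}$ in the statement is then defined and Lemma \ref{angle lemma} applies to the pair. Then I would set $\gamma_1:=[\omega_1,\omega_2]$, $\gamma_2:=\omega_2$, $\gamma_3:=\omega_1\omega_2^{-1}\omega_1^{-1}$, so that cutting $S_{1,1}$ along $\omega_2$ again yields a pair of pants with boundary curves $\gamma_1,\gamma_2,\gamma_3$.

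The one genuinely new point is that $\gamma_1$ still plays the role of the boundary curve $g_1$. By the classical fact of Nielsen that every automorphism of $F_2$ sends $[a,b]$ to a conjugate of $[a,b]^{\pm1}$, the element $\gamma_1$ is conjugate in $G_{1,1}$ to $g_1$ or to $g_1^{-1}$; since the Margulis invariant of a cocycle is invariant under conjugation and under inversion, ${\rm Mar}_{{\bf u}}(\gamma_1)={\rm Mar}_{{\bf u}}(g_1)$ for every cocycle ${\bf u}$, while $\gamma_1$ is hyperbolic with $\ell(\gamma_1)=\ell(g_1)$ and null frame the corresponding $G_{1,1}$-translate of that of $g_1$. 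With this in hand, Lemma \ref{existence}, Lemma \ref{trans v} and Lemma \ref{uT} transfer word for word to $(\omega_1,\omega_2,\gamma_1,\gamma_2,\gamma_3)$: the cocycle identity \eqref{ug1} is purely algebraic, the normalization in Lemma \ref{trans v} uses only that the neutral vector of $\gamma_1$ is the principal eigenvector of $[\omega_1,\omega_2]$ together with a position argument for the null vectors, and the surjectivity input — Lemma $6.1$ of \cite{cdg2010} applied to the basis $(\omega_1,\omega_2)$ together with the analogue of the equivalence \eqref{kappa and zeta3} between ${\rm Mar}_{{\bf u}}(\gamma_1)$ and ${\rm Mar}_{{\bf u}}(\omega_1\omega_2)$ — is insensitive to the choice of basis. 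Combined with the injectivity of Lemma \ref{inject}, this shows that the triple $({\rm Mar}_{{\bf u}}(\omega_1),{\rm Mar}_{{\bf u}}(\omega_2),{\rm Mar}_{{\bf u}}(\gamma_1))$ determines $[{\bf u}]$, as claimed.

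The step I expect to require the most care is making the angle bookkeeping airtight. After the conjugation that places the axes of $\omega_1$ and $\omega_2$ in the normalized position of Lemma \ref{existence} — with the angle between them equal to $\theta_{\omega_1}^{\omega_2}$ and the eigenvalues those of $\omega_1,\omega_2$ — one must recompute the analogue of \eqref{equation} and check that the coefficients of the free parameters $a,b,c,d$ in the expression for ${\rm Mar}_{{\bf u}}(\gamma_1)$ are proportional to $\cos\theta_{\omega_1}^{\omega_2}$, hence nonzero precisely when $\theta_{\omega_1}^{\omega_2}\neq\pi/2$, so that the prescribed value of ${\rm Mar}_{{\bf u}}(\gamma_1)$ is attainable and no generating pair outside the excluded right-angle case is lost; dually, when $\theta_{\omega_1}^{\omega_2}=\pi/2$ one should recover the same type of linear constraint among ${\rm Mar}_{{\bf u}}(\gamma_1)$, ${\rm Mar}_{{\bf u}}(\omega_1)$ and ${\rm Mar}_{{\bf u}}(\omega_2)$ as in Lemma \ref{existence}. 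This is the same trigonometric computation as before with the angle renamed, but it is the point at which the hypothesis $\theta_{\omega_1}^{\omega_2}\neq\pi/2$ is actually consumed.
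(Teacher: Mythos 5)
Your proposal is correct and follows essentially the same route as the paper: reduce to the case of $w_1,w_2$ by noting that the axes of any generating pair intersect, that the commutator $[\omega_1,\omega_2]$ is (up to conjugation) $g_1^{\pm 1}$ while the Margulis invariant is conjugation- and inversion-invariant, and then rerun the computation behind Proposition \ref{once-holed torus} with the angle $\theta_{\omega_1}^{\omega_2}$ in place of $\theta_{w_1}^{w_2}$. Your write-up simply makes explicit (via Nielsen's theorem and the transfer of Lemmas \ref{existence}, \ref{trans v}, \ref{uT}) what the paper's brief proof leaves implicit.
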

\begin{proof}
We consider $\omega_1, \omega_2$ as elements of ${\rm PSL}(2, {\mathbb R})$. Since they are the generators of $G_{1,1}$, their invariant axes in ${\mathbb H}^2$ intersects. 
Furthermore the commutator $[\omega_1, \omega_2]$ equals to $g_1^{\pm 1}$. Note that ${\rm Mar}_{{\bf u}}(g_1) = {\rm Mar}_{{\bf u}}(g_1^{-1})$. Therefore, this is the same condition with $w_1$ and $w_2$ up to conjugation. So we can calculate the Margulis invariant of $g_1$ (or $g_1^{-1}$) similarly.
\end{proof}

\begin{remark}
V.Charette, T.Drumm, and W.Goldman (\cite{cdg2014}) describe the classification of affine deformations of $G_{1,1}$.
Their discussion is due to the sets of generators of $G_{1,1}$. 
By considering the Margulis invariants of all primitive elements in $G_{1,1}$(that is, there exists no element in $G_{1,1}$ such that these two elements generate $G_{1,1}$), 
they obtain the classification of proper affine deformations of ${\rm H}^1(G_{1,1}, {\mathbb R}_1^2)$. In this paper, however, evaluating the Margulis invariant of the unique boundary component is needed, since we need to glue boundaries.
Note that it is not a primitive element in $G_{1,1}$.
An equation for the Margulis invariant for this element is introduced in \cite{cg}.
\end{remark}

\section{Coordinates of affine deformations for a hyperbolic surface with type $(g,b)$}
\label{sec coordinates}
A goal of this section is to prove Theorem \ref{deformation space}. 
Let $G_{g,b}$ be a Fuchsian group of the hyperbolic surface $S_{g,b}$. Assume that $S_{g,b}$ has at least one hole $(b > 0)$ and no cusps.
We will parametrize the affine deformation space ${\rm H}^1(G_{g,b}, {\mathbb R}_1^2)$.

\subsection{Decomposition}
It is known that the dimension of ${\rm H}^1(G_{g,b}, {\mathbb R}_1^2)$ is equal to ${\bf 6 g + 3b -6}$.

\begin{figure}[htbp] 
\begin{center} 
\includegraphics[width=7cm]{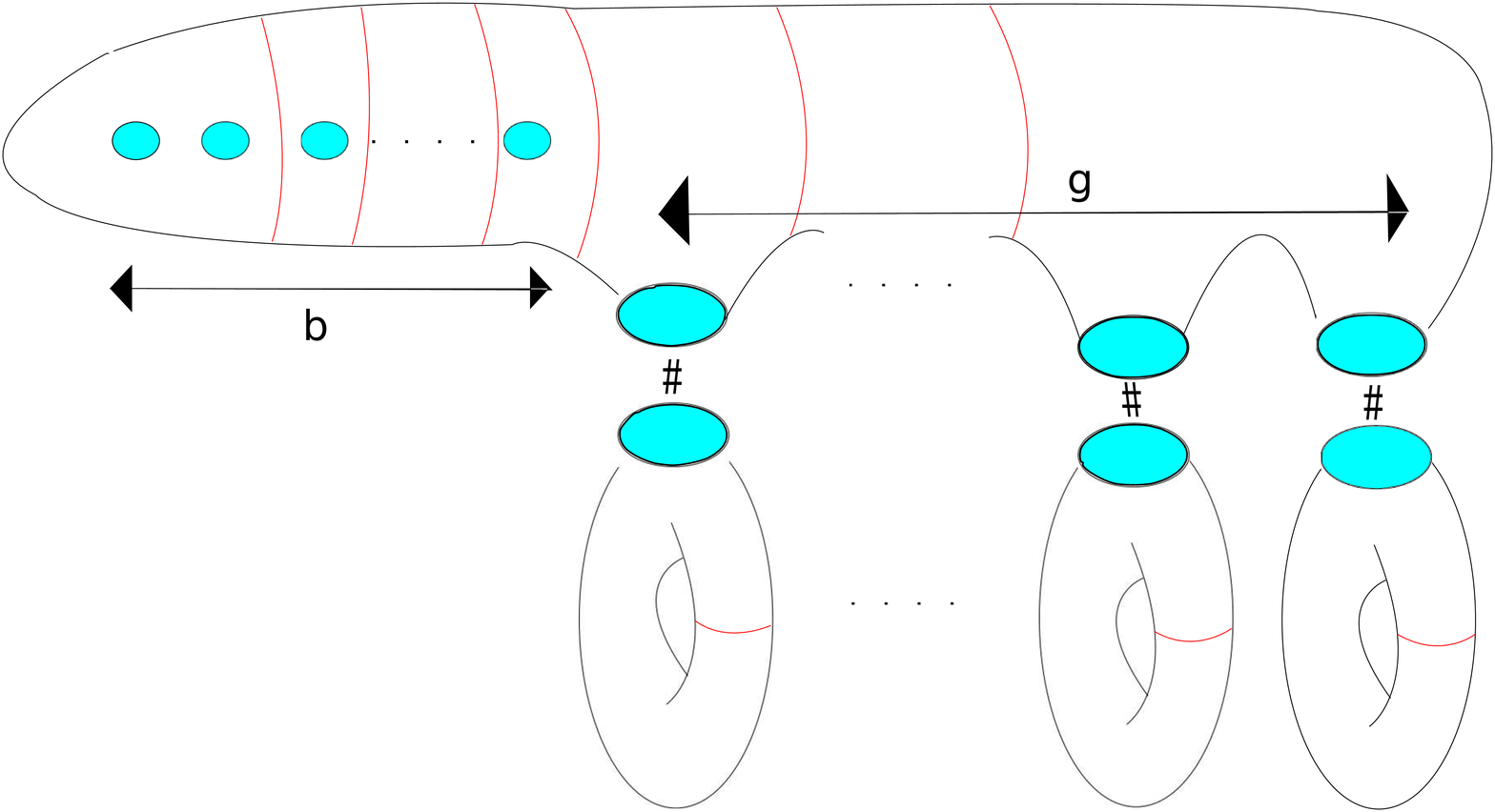} 
\end{center} 
\caption{The decomposition of $S_{g, b}$.}
\label{pantsdecomposition} 
\end{figure}

We take a certain decomposition of $S_{g,b}$ into $(g+b-2)$ pairs of pants and $g$ once-holed tori.
Figure \ref{pantsdecomposition} illustrates this decomposition.
We denote the components by $P_1, P_2, \ldots, P_{g+b-2}$ (the pairs of pants) and $T_1, \ldots, T_g$ (the once-holed tori) such that
\begin{itemize}
\item Let $P_1, \ldots, P_{b-1}$ contain the original holes of $S_{g,b}$ respectively.
\item Furthermore $P_{b+j}$ is originally attached by $T_j$, $0 \leq j \leq g-2$.
\item Let $P_i$ and $P_{i+1}$ share a unique simple closed curve on $S_{g,b}$, $1 \leq i \leq g+b-3$.
\item Let $T_g$ attach $P_{b+g-2}$ on $S_{g,b}$.
\end{itemize}
Set ${\bf I}:=\{ 1, \ldots, b-1 \}$, ${\bf J}:=\{ 1, \ldots, g \}$ and ${\bf K}:=\{ 1, \ldots, b+g-3 \}$.
We label principal loops in these surfaces as follows: 
\begin{itemize}
\item The original boundaries : $\gamma^1, \ldots, \gamma^b$ $(\gamma^i \in \pi_1(P_i), i \in {\bf I})$
\item The unique holes of $T_j$ : $g^1, \ldots, g^b  (g^j \in \pi_1(T_j \cap P_{b+j}), j \in {\bf J})$
\item The boundaries of the pairs of pants except the above two kinds of loops  : $f_1, \ldots, f_{b + g -3}$, $( f_k \in P_k \cap P_{k+1}, k \in {\bf K} )$
\item The longitude loop on $T_j$ : $w_1^j, ( j \in {\bf J})$
\item The meridian loop on $T_j$ : $w_2^j, ( j \in {\bf J})$
\end{itemize}
We notice that the hyperbolic lengths of these loops and the twisting parameters achieve the Fenchel-Nielsen coordinates of $S_{g,b}$ in Teichm\"uller space on the surface of type $(g,b)$. The dimension are also ${\bf 6g+3b-6}$.

\subsection{Combination}
This part is due to the result in \cite{m2016}.
About the case of the surface $S_{g,b} - \sum_{j \in {\bf J}} {T_j}$ (holed sphere), it is shown that all cocycles are linearly parametrized (up to translation) by Margulis invariants of $\gamma^i, g^j, f_k (i \in {\bf I}, j \in {\bf J}, k \in {\bf K})$ and {\it affine twist parameters} of $f_k (k \in {\bf K})$. 
Here a discussion is to define {\it affine twist cocycle} and an associated parameter, {\it affine twist parameter} on $S_{g,b}$. \newline 

First of all, consider a fourth-holed sphere $S_4$ with fixed hyperbolic structure. 
The fundamental group $G_4$ is a free group of rank three, which is represented by $$\langle g_1, g_2, g_3, g_4 | g_1 g_2 g_3 g_4 = id \rangle.$$
Let $Q_1:= \langle g_1, g_2 \rangle$ and $Q_2:=\langle g_3, g_4 \rangle $ be subgroups in $G_4$. Set $f:=g_2^{-1} g_1^{-1}$ (note that $f^{-1}= g_4^{-1} g_3^{-1}$), which corresponds to  a dividing curve in $S_4$.
Take a cocycle ${\bf u}^{\xi} \in {\rm Z}^1(Q_{\xi}, {\mathbb R}_1^2), ({\xi}=1,2)$, and assume ${\rm Mar}_{{\bf u}^1} (f) = {\rm Mar}_{{\bf u}^2} (f)$. 
\begin{definition}[Combination]
\label{combination}
A cocycle ${\bf u}^1 \#_f {\bf u}^2$ on $G_4$ can be defined as follows:
\begin{eqnarray*}
{\bf u}^1 \#_f {\bf u}^2 (h)=
\begin{cases}
{\bf u}^1(h) & ( h \in Q_1), \\
{\bf u}^2(h) + \delta_{{\bf trans}}(h)  & (h \in Q_2), \\
{\rm by \, \, cocycle \, \, condition} & ({\rm otherwise}),
\end{cases}
\end{eqnarray*}
where a (fixed) vector ${\bf trans}$ is chosen to satisfy 
\begin{equation}
\label{trans}
{\bf u}^2(f) + \delta_{{\bf trans}}(f) = {\bf u}^1(f).
\end{equation}
\end{definition}
We realize that this equation leave one dimensional ambiguity of choice of ${\bf trans} \in {\mathbb R}_1^2$. Therefore, considering this direction achieves an concept of {\it affine twist cocycle}.

\begin{definition}[Affine twist]
\label{affine twist}
A cocycle ${\bf AT}_f$ on $G_4$ defined as the following manner was named an {\rm affine twist cocycle along {\it f}}.
\begin{eqnarray*}
 {\bf AT}_f (h) =
 \begin{cases}
 {\bf 0} & ( h \in Q_1), \\
 {\bf X}_f^0 - h {\bf X}_f^0 & (h \in Q_2), \\
 {\rm by \, \, cocycle \, \, condition} & ({\rm otherwise}).
 \end{cases}
\end{eqnarray*}
\end{definition}

\begin{lemma}[\cite{m2016}]
Cocycles ${\bf u}^1 \# {\bf u}^2 + \tau {\bf AT}_f $ generate ${\rm H}^1(G_4, {\mathbb R}^4)$, where $\tau \in {\mathbb R}$,  and ${\bf u}^1, {\bf u}^2$ satisfy the assumption in Definition $\ref{combination}$, with a cohomology class of ${\bf u}^1, {\bf u}^2$ fixed.
\end{lemma}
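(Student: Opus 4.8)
The plan is to use the fact that $G_4$ is an amalgamated free product and to treat the combination $\#_f$ and the twist cocycle ${\bf AT}_f$ as the two pieces of a (by-hand) Mayer--Vietoris decomposition of ${\rm H}^1(G_4, {\mathbb R}_1^2)$. First I would record the algebraic fact that, since $g_1 g_2 g_3 g_4 = id$, the loop $f = g_2^{-1} g_1^{-1}$ lies in $Q_1 \cap Q_2$ (indeed $f = g_3 g_4 \in Q_2$), that it generates an infinite cyclic subgroup $\langle f \rangle$ in each of $Q_1$ and $Q_2$, and that the presentation $\langle g_1, g_2, g_3, g_4 \mid g_1 g_2 g_3 g_4 = id \rangle$ exhibits $G_4$ as the amalgamated free product $Q_1 \ast_{\langle f \rangle} Q_2$. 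Consequently, giving a cocycle on $G_4$ (equivalently, an affine deformation $\rho : G_4 \to {\rm SO}^o(2,1) \ltimes {\mathbb R}_1^2$ with the prescribed linear part) is the same, by the universal property of the amalgam, as giving cocycles on $Q_1$ and on $Q_2$ whose restrictions to $\langle f \rangle$ agree, i.e.\ whose values on $f$ coincide (the linear part of $f$ being one and the same element of ${\rm SO}^o(2,1)$ in either factor).

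Next I would check that both constructions are legitimate. For the twist: ${\bf AT}_f$ is ${\bf 0}$ on $Q_1$ and $\delta_{{\bf X}_f^0}$ on $Q_2$, and these agree on $f$ because ${\bf X}_f^0$ is the fixed vector of $f$, so ${\bf AT}_f(f) = {\bf X}_f^0 - f {\bf X}_f^0 = {\bf 0}$ from both sides; hence ${\bf AT}_f$ extends uniquely to a cocycle on $G_4$. For the combination: the requirement \eqref{trans} reads ${\bf u}^2(f) + ({\bf trans} - f\,{\bf trans}) = {\bf u}^1(f)$, and since the operator ${\bf v} \mapsto {\bf v} - f\,{\bf v}$ has image $({\bf X}_f^0)^{\perp}$ and kernel ${\mathbb R}\,{\bf X}_f^0$, such a ${\bf trans}$ exists exactly when $B\big({\bf u}^1(f) - {\bf u}^2(f),\, {\bf X}_f^0\big) = 0$, i.e.\ precisely under the hypothesis ${\rm Mar}_{{\bf u}^1}(f) = {\rm Mar}_{{\bf u}^2}(f)$, and it is then unique up to adding a multiple of ${\bf X}_f^0$. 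Replacing ${\bf trans}$ by ${\bf trans} + \tau {\bf X}_f^0$ adds $\tau\,\delta_{{\bf X}_f^0}$ to the $Q_2$-part, i.e.\ adds exactly $\tau\,{\bf AT}_f$ on $G_4$; this is the one-parameter ambiguity of the combination. A short verification — replacing $({\bf u}^1, {\bf u}^2)$ by $({\bf u}^1 + \delta_{{\bf v}}, {\bf u}^2 + \delta_{{\bf v}})$ and ${\bf trans}$ by ${\bf trans} + {\bf v}$ — shows that the resulting class $[{\bf u}^1 \#_f {\bf u}^2 + \tau\,{\bf AT}_f] \in {\rm H}^1(G_4, {\mathbb R}_1^2)$ depends only on $[{\bf u}^1]$, $[{\bf u}^2]$ and $\tau$.

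The substantive point is that these classes exhaust ${\rm H}^1(G_4, {\mathbb R}_1^2)$. Given any cocycle ${\bf u}$ on $G_4$, put ${\bf u}^1 := {\bf u}|_{Q_1}$ and ${\bf u}^2 := {\bf u}|_{Q_2}$; these automatically satisfy ${\rm Mar}_{{\bf u}^1}(f) = {\rm Mar}_{{\bf u}^2}(f)$ since $f \in Q_1 \cap Q_2$. Now \eqref{trans} becomes $\delta_{{\bf trans}}(f) = {\bf 0}$, which forces ${\bf trans} \in \ker(\mathrm{id} - f) = {\mathbb R}\,{\bf X}_f^0$, say ${\bf trans} = \tau_0 {\bf X}_f^0$; comparing ${\bf u}^1 \#_f {\bf u}^2$ with ${\bf u}$ on generators of $Q_1$ and of $Q_2$ then gives ${\bf u}^1 \#_f {\bf u}^2 = {\bf u} + \tau_0\,{\bf AT}_f$ identically on $G_4$, so $[{\bf u}] = \big[{\bf u}^1 \#_f {\bf u}^2 + (-\tau_0)\,{\bf AT}_f\big]$ is of the asserted form. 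Hence the family generates ${\rm H}^1(G_4, {\mathbb R}_1^2)$. I expect the only real friction to be bookkeeping: keeping the choice of ${\bf trans}$ synchronized through the two normalizations (restriction to the factors and coboundary changes) so that the leftover term is always a genuine multiple of ${\bf AT}_f$ and not merely some cocycle vanishing on $Q_1$. For the record one may also count dimensions: $\dim {\rm H}^1(Q_{\xi}, {\mathbb R}_1^2) = 3$ for $\xi = 1,2$, the matching condition on $f$ removes one dimension, and the $\tau$-direction restores it, giving $3 + 3 - 1 + 1 = 6 = \dim {\rm H}^1(G_4, {\mathbb R}_1^2)$, so that the surjection is in fact an isomorphism once one checks $[{\bf AT}_f] \neq 0$.
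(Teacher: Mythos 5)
Your argument is correct, and it is worth noting that the paper itself gives no proof of this lemma here: it is quoted from \cite{m2016}, where the combination and affine twist constructions originate. Your route --- viewing $G_4$ as the amalgam $Q_1 \ast_{\langle f \rangle} Q_2$ (with $f=(g_1g_2)^{-1}=g_3g_4$), identifying cocycles on $G_4$ with pairs of cocycles on $Q_1,Q_2$ agreeing on $f$ via the universal property of the amalgam applied to lifts into ${\rm SO}^o(2,1)\ltimes{\mathbb R}_1^2$, and then analyzing the operator ${\bf v}\mapsto{\bf v}-f{\bf v}$ (kernel ${\mathbb R}{\bf X}_f^0$, image $({\bf X}_f^0)^\perp$) --- is exactly the mechanism that makes Definitions \ref{combination} and \ref{affine twist} well posed, and your surjectivity step (restrict a given ${\bf u}$ to $Q_1,Q_2$, observe that \eqref{trans} then forces ${\bf trans}=\tau_0{\bf X}_f^0$, and conclude ${\bf u}^1\#_f{\bf u}^2={\bf u}+\tau_0{\bf AT}_f$ on all of $G_4$) cleanly establishes the generation claim, which is all the lemma asserts. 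It also correctly explains why the matching hypothesis is precisely ${\rm Mar}_{{\bf u}^1}(f)={\rm Mar}_{{\bf u}^2}(f)$, via ${\rm Mar}_{\bf u}(f)=B({\bf u}(f),{\bf X}_f^0)$.

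Two small points of care, neither fatal to the statement as posed. First, your well-definedness check only treats the simultaneous replacement $({\bf u}^1+\delta_{\bf v},{\bf u}^2+\delta_{\bf v})$; for independent changes of representatives $({\bf u}^1+\delta_{\bf v},{\bf u}^2+\delta_{\bf w})$ the combined cocycle changes by a global coboundary \emph{plus} a multiple of ${\bf AT}_f$, which is harmless only because that multiple is absorbed into the $\tau$-parameter --- this should be said explicitly if you want the family to be parametrized by $([{\bf u}^1],[{\bf u}^2],\tau)$. Second, your closing remark that the surjection is an isomorphism ``once one checks $[{\bf AT}_f]\neq 0$'' is too quick: injectivity of the map from $\{({\rm matched\ pairs})\}\times{\mathbb R}$ to ${\rm H}^1(G_4,{\mathbb R}_1^2)$ needs either the dimension count $5+1=6$ together with linearity of a fixed choice of ${\bf trans}$ (e.g.\ ${\bf trans}\in({\bf X}_f^0)^\perp$), or a direct argument; nonvanishing of $[{\bf AT}_f]$ alone does not suffice. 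Since the lemma only claims generation, these are refinements rather than gaps.
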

This lemma induces the following claim under a certain normalization.
\begin{proposition}
For any $(\alpha_1, \alpha_2, \alpha_3, \alpha_4, \beta, \tau) \in {\mathbb R}^6$, there exists a unique cocycle ${\bf u}$ up to translation such that
\begin{eqnarray*}
{\rm Mar}_{{\bf u}} (g_i) &=& \alpha_i (i=1,2,3,4) \\
{\rm Mar}_{{\bf u}} (f) &=& \beta \\
{\rm AfT}_{{\bf u}} (f) &=& \tau,
\end{eqnarray*}
where ${\rm AfT}$ denotes a coefficient of ${\bf AT}_f$ in ${\bf u}$.(to consider this coefficient, we need a normalization.)
\end{proposition}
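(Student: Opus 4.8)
The idea is to invert the combination construction: build the two ``halves'' of the cocycle on the pairs of pants $Q_1=\langle g_1,g_2\rangle$ and $Q_2=\langle g_3,g_4\rangle$ from the prescribed Margulis data, glue them by $\#_f$, and then add $\tau$ times the affine twist ${\bf AT}_f$ to realise the remaining degree of freedom. First I would record that $f$ is a boundary loop of \emph{both} pairs of pants: the boundary loops of $Q_1$ are $g_1$, $g_2$ and $g_1g_2=f^{-1}$, while from $g_1g_2g_3g_4=id$ one gets $g_3g_4=f$, so the boundary loops of $Q_2$ are $g_3$, $g_4$ and $f$. Since ${\rm Mar}_{\bf u}$ is invariant under inversion, prescribing the value $\beta$ on $f$ is meaningful on both sides.

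Next I would apply the parametrization of the affine deformation space of a single pair of pants by the Margulis invariants of its three boundary loops (cf.\ \cite{cdg2010}, \cite{m2016}): for a free group of rank two this is a linear isomorphism onto ${\mathbb R}^3$. This yields cocycles ${\bf u}^1\in{\rm Z}^1(Q_1,{\mathbb R}_1^2)$ with $({\rm Mar}_{{\bf u}^1}(g_1),{\rm Mar}_{{\bf u}^1}(g_2),{\rm Mar}_{{\bf u}^1}(f))=(\alpha_1,\alpha_2,\beta)$ and ${\bf u}^2\in{\rm Z}^1(Q_2,{\mathbb R}_1^2)$ with $({\rm Mar}_{{\bf u}^2}(g_3),{\rm Mar}_{{\bf u}^2}(g_4),{\rm Mar}_{{\bf u}^2}(f))=(\alpha_3,\alpha_4,\beta)$, each unique up to translation; I would fix normal-form representatives (the normalization mentioned in the statement). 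Because the two $f$-Margulis invariants coincide, Definition \ref{combination} applies and I would set ${\bf u}:={\bf u}^1\#_f{\bf u}^2+\tau\,{\bf AT}_f$. For existence I then verify the six equations: on $Q_1$ the twist term vanishes and $f\in Q_1$, so ${\bf u}|_{Q_1}={\bf u}^1$ gives ${\rm Mar}_{\bf u}(g_1)=\alpha_1$, ${\rm Mar}_{\bf u}(g_2)=\alpha_2$, ${\rm Mar}_{\bf u}(f)=\beta$; on $Q_2$ the cocycle ${\bf u}$ differs from ${\bf u}^2$ only by the coboundary $\delta_{{\bf trans}+\tau{\bf X}_f^0}$, so ${\rm Mar}_{\bf u}(g_3)=\alpha_3$, ${\rm Mar}_{\bf u}(g_4)=\alpha_4$; and the coefficient of ${\bf AT}_f$ reads off as ${\rm AfT}_{\bf u}(f)=\tau$.

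For uniqueness up to translation I would observe that $(\alpha_1,\alpha_2,\alpha_3,\alpha_4,\beta,\tau)\mapsto[{\bf u}]$ is a linear map ${\mathbb R}^6\to{\rm H}^1(G_4,{\mathbb R}_1^2)$: linearity follows from the linearity of the pants parametrization, from the fact that a linear choice of the vector ${\bf trans}$ solving \eqref{trans} makes $({\bf u}^1,{\bf u}^2)\mapsto{\bf u}^1\#_f{\bf u}^2$ linear along the gluing locus, and from $\tau\mapsto\tau\,{\bf AT}_f$. Since $\dim{\rm H}^1(G_4,{\mathbb R}_1^2)=6g+3b-6=6$ for $(g,b)=(0,4)$, it suffices to show this map is onto, which is exactly the content of the lemma of \cite{m2016} stated above; hence it is an isomorphism and $[{\bf u}]$ is unique. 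Alternatively, two cocycles realising the same tuple agree up to translation on each of $Q_1$, $Q_2$ by Lemma \ref{inject}, so their difference lies in the kernel of the restriction map ${\rm H}^1(G_4,{\mathbb R}_1^2)\to{\rm H}^1(Q_1,{\mathbb R}_1^2)\oplus{\rm H}^1(Q_2,{\mathbb R}_1^2)$, which is one-dimensional and spanned by $[{\bf AT}_f]$; matching the coefficients $\tau$ forces this difference to be zero.

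I expect the subtle point to be the normalization making ${\rm AfT}_{\bf u}(f)$ well defined. The ambiguity of ${\bf trans}$ in Definition \ref{combination}, the kernel of the restriction map, and the direction of the affine twist are all the same one-dimensional subspace ${\mathbb R}\,{\bf X}_f^0$, so one must fix a normal form for ${\bf u}^1$, ${\bf u}^2$ and for ${\bf trans}$, check that it genuinely singles out a coefficient that depends linearly on the data, and confirm that ${\bf AT}_f$ is nontrivial in ${\rm H}^1(G_4,{\mathbb R}_1^2)$ so that $\tau$ is an honest sixth parameter; the latter is guaranteed by the surjectivity in the combination lemma together with the dimension count, and can also be seen from a Mayer--Vietoris argument in which $[{\bf AT}_f]$ is the image of the nonzero $f$-invariant vector ${\bf X}_f^0$ under the connecting homomorphism.
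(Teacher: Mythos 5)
Your proposal is correct and follows essentially the same route as the paper, which derives this proposition directly from the combination lemma of \cite{m2016} together with the three-boundary Margulis-invariant parametrization of a pair of pants; your write-up merely supplies the details (gluing, twist coefficient, and the Mayer--Vietoris identification of the kernel of restriction with ${\mathbb R}[{\bf AT}_f]$) that the paper leaves implicit. The only small slip is invoking Lemma \ref{inject} for agreement on $Q_1,Q_2$, where what you actually need (and already cited) is the pants parametrization by boundary invariants; with that substitution the argument is complete.
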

The cases of $S_{0,b} (b \geq 4)$ are discussed similarly.

\subsection{Proof of Theorem \ref{deformation space}}
Consider a hyperbolic surface $S_{1,b}$. Let $g_1$ be a simple closed curve on $S_{1, b}$, which is divided by $g_1$ into $S_1:=S_{0,b+1}$ and $S_2:=S_{1,1}$. We use the notation in $\S \ref{sec torus}$, that is, let $G_{1,1} = \langle w_1, w_2 \rangle$ be a fundamental group with $S_2 = {\mathbb H}^2/ G_{1,1}$. 

Then we can consider {\it combinations} of cocycles and {\it affine twists} on $S_{1,b}$ without the case $\theta_{w_1}^{w_2} = \pi/2$.
\begin{definition}[Combination]
\label{new combination}
Let ${\bf u}^{\xi}$ be a cocycle on $S_{\xi}$, ${\xi}=1,2$. Assume that ${\rm Mar}_{{\bf u}^1}(g_1) = {\rm Mar}_{{\bf u}^2}(g_1)$.
A cocycle ${\bf u}^1 \# {\bf u}^2$ on $G_{1,b}$ is defined similarly as Definition $\ref{combination}$.
\end{definition}

\begin{definition}[Affine twist]
\label{new affine twist}
A cocycle ${\bf AT}_{g_1}$ on $G_{1,b}$ is defined similarly as Definition $\ref{affine twist}$.
\end{definition}

Here we mention a remark about the translation $``{\bf trans}''$.
\begin{remark}
Suppose that ${\bf u}(g_1) = \alpha_1 {\bf X}_1^0 + c_1^- {\bf X}_1^- + c_1^+ {\bf X}_1^+$ are already determined. 
Let $\mu_1$ be the smallest eigenvalue of $g_1$. We choose the vector 
$$
{\bf trans} := \frac{(c_1^- {\bf X}_1^- + c_1^+ {\bf X}_1^+) - g_1^{-1} (c_1^- {\bf X}_1^- + c_1^+ {\bf X}_1^+)}{(1 - \mu_1)(1-\mu_1^{-1})}.
$$
Then this vector meets the equation $\eqref{trans}$.
\end{remark}

For cocycles ${\bf u}^1 \#_{g_1} {\bf u}^2 + \tau {\bf AT}_{g_1} (\tau \in {\mathbb R})$ under the assumption in definition $\ref{new combination}$ and $\ref{new affine twist}$, we have a bijection.
\begin{eqnarray*}
{\rm H}^1(\pi_1(S_1), {\mathbb R}_1^2) \times {\rm H}^1(\pi_1(S_2), {\mathbb R}_1^2) \times {\mathbb R} &\to& {\rm H}^1(\pi_1(S), {\mathbb R}_1^2) \\
([{\bf u}^1], [{\bf u}^2], \tau) &\mapsto& [{\bf u}^1 \#_{g_1} {\bf u}^2 + \tau {\bf AT}_{g_1} ]
\end{eqnarray*}
In this map, we choose the representative elements ${\bf u}^{\xi}$ of $[{\bf u}^{\xi}] ({\xi}=1,2)$ respectively. 

\begin{proof}[{\bf Proof of Theorem \ref{deformation space}}]
We find a cocycle such that 
\begin{eqnarray*}
{\rm Mar}_{{\bf u}}(\gamma^i) &=& \alpha^i , {\rm Mar}_{{\bf u}}(g^j) = \kappa^j , {\rm Mar}_{{\bf u}}(f_k) = \beta_k, \\
{\rm Mar}_{{\bf u}}(w_1^j) &=& \zeta_1^j, {\rm Mar}_{{\bf u}}(w_2^j) = \zeta_2^j, \\
{\rm AfT}_{{\bf u}}(g^j) &=& \tau^j, {\rm AfT}_{{\bf u}}(f_k) = \epsilon_k 
\end{eqnarray*}
where $i \in {\bf I}, j \in {\bf J}, k \in {\bf K}$.

We cut $S_{g,b}$ cut into $S_{0,b+g} \cup \sum_{j \in {\bf J}} T^j$ like as Figure \ref{pantsdecomposition}.
\begin{itemize}
\item[$(i)$] On $S_{0, b+g}$, following \cite{m2016}, we define a cocycle ${\bf u}^0$ linearly by using $(\alpha^i, \kappa^j, \beta_k, \epsilon_k), i \in {\bf I}, j \in {\bf J}, k \in {\bf K}$.
\item[$(ii)$] On $T^j$, we define a cocycle ${\bf u}^j$ linearly by using 
$( \kappa^j, \zeta_1^j, \zeta_2^j ), j \in {\bf J}$. A method is, for example, to use the representation of Proposition \ref{once-holed torus}.
\item[$(iii)$] Finally we define a cocycle ${\bf u}$ depended linearly on all values inductively. 
We set 
\begin{equation*}
{\bf w}_1 := {\bf u}^0 \#_{g^1} {\bf u}^1  + \tau^1 {{\bf AT}_{g^1}} 
\end{equation*}
For $j \in {\bf J}$, we define ${\bf w}_j := {\bf w}_{j-1} \#_{g^j} {\bf u}^j  + \tau^j {{\bf AT}_{g^j}}$.
A desired cocycle ${\bf u}$ is just ${\bf w}_{g}$.
\end{itemize}

It is trivial that this construction gives a canonical linear isomorphism between ${\mathbb R}^{6g+3b-6}$ and ${\rm H}^1(G_{g,b}, {\mathbb R}_1^2)$ under translation equivalence.
\end{proof}

\begin{remark}
We can parametrize the affine deformation space ${\rm H}^1(G_{g,b}, {\mathbb R}_1^2)$ even if a handle $T^j$ of $S_{g,b}$ satisfies $\theta_{w_1^j}^{w_2^j} = \pi /2$. We take a set of generator $\omega_1$ and $\omega_2$ of $\pi_1(T^j)$ with $\theta_{\omega_1}^{\omega_2} \neq \pi / 2$. By Corollary $\ref{any set of generators}$, we have only to consider $\omega_1$ and $\omega_2$ instead of $w_1$ and $w_2$.
\end{remark}

\subsection{Gluing Margulis spacetimes}
We define a gluing of surfaces. Let $S_1, S_2$ be hyperbolic surfaces with holes but no cusps.
Suppose that $S_1$ and $S_2$ have a boundary component which is same length.
Then let $S_1 \# S_2$ be a new hyperbolic surface glued along the boundary with an appropriate twisting parameter.
An affirmative answer to the following problem provides us the properness of affine deformations.
\begin{problem}
Let $M(S)$ and $M(R)$ be Margulis spacetimes, whose underling hyperbolic surfaces are $S$ and $R$ respectively. Suppose that these hyperbolic surfaces have same boundary component $\partial$. Let $S \#_{\partial} R$ be a new hyperbolic surface, which are glued by $S$ and $R$ along $\partial$. Then $M(S) \#_{\tau {\bf AT}_{\partial}} M(R)$ is a Margulis spacetimes for some $\tau = {\rm AfT}_{{\bf u}}(\partial) \in {\mathbb R}$, which has $S \#_{\partial} R$ as its underlying hyperbolic space.
\end{problem}

\section{Deformation of hyperbolic structures along affine twist cocycle}
\label{sec deformation}
In this section, we show Theorem \ref{cosine formula}.

\begin{proof}
Let $S = S_1 \cup S_2$ be a hyperbolic surface where $S_1$ and $S_2$ are glued along a loop $h$.
Let their fundamental groups denoted by $\Sigma_1, \Sigma_2$ respectively.
From the lemma of Goldman-Margulis \cite{gm2000}, we have 
$$
\left. \frac{d \, \ell_{\sigma}}{dt} (t) \right|_{t=0} = 2 \, {\rm Mar}_{{\bf AT}_h} (\sigma),
$$
where $\ell_{\sigma}(t) := \ell(\sigma(t))$, and $\sigma(t)$ is the deformation of the hyperbolic structure of $\sigma$ with respect to ${\bf AT}_h$. Thus we have only to calculate the Margulis invariants for the affine twist cocycle in order to prove Theorem \ref{cosine formula}.
We easily notice that every $\sigma \in \Sigma_{\xi}$ satisfy ${\rm Mar}_{{\bf AT}_h} (\sigma) = 0 ({\xi}=1,2)$.

In \cite{m2016}, the author observes that, for the simplest loop on $S$ which passes both $S_1$ and $S_2$, the equation $\eqref{wolpert}$ is shown to hold. Indeed, the following lemma is proved by the same idea. However, for completeness, we shall give a proof.
\begin{lemma}
Take any loop $\sigma, \sigma'$in $\pi_1(S)$, 
\begin{eqnarray*}
\sigma = \sigma_1^1 \sigma_1^2 \sigma_2^1 \sigma_2^2 \cdots \sigma_n^1 \sigma_n^2, \, \, \sigma' = \sigma_1^1 \sigma_1^2 \sigma_2^1 \sigma_2^2 \cdots \sigma_n^1,
\end{eqnarray*}
where the loop $\sigma_k^{\xi}$ is in $\Sigma_{\xi}$ $(1 \leq k \leq n)$. Then an equation 
$$
{\rm Mar}_{{\bf AT}_h}(\sigma) = \sum_{j=1}^n \cos{(\theta_h^{\sigma})_{p_j}} + \sum_{j=1}^n \cos{(\theta_h^{\sigma})_{q_j}}
$$
holds, where points $p_j, q_j$ are intersections of $\sigma, h$ on $S$ such that a tangent vector at $p_j$ along $h$ orients a side of $S_2$, and $q_j$ is otherwise.
\end{lemma}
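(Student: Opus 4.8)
The plan is to compute $\mathrm{Mar}_{{\bf AT}_h}(\sigma)$ directly from the definition of the Margulis invariant as the pairing $B({\bf AT}_h(\sigma)\cdot x - x, {\bf X}_\sigma^0)$, but it is cleaner to exploit additivity of the cocycle relative to the neutral vector ${\bf X}_h^0$. The key observation is that, by the affine twist construction in Definition \ref{new affine twist}, the cocycle ${\bf AT}_h$ vanishes on $\Sigma_1$ and equals the coboundary-like term ${\bf X}_h^0 - g{\bf X}_h^0$ on $\Sigma_2$; so writing $\sigma = \sigma_1^1\sigma_1^2\cdots\sigma_n^1\sigma_n^2$ and applying the cocycle condition repeatedly, only the factors $\sigma_k^2 \in \Sigma_2$ contribute, and each contributes a term of the shape $w_k({\bf X}_h^0 - \sigma_k^2{\bf X}_h^0)$ where $w_k = \sigma_1^1\sigma_1^2\cdots\sigma_{k-1}^2\sigma_k^1$ is the accumulated prefix. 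Telescoping the sum $\sum_k w_k({\bf X}_h^0 - \sigma_k^2{\bf X}_h^0)$ yields a manageable closed form for ${\bf AT}_h(\sigma)$.

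Next I would pair this expression with ${\bf X}_\sigma^0$ using Lemma \ref{inject}'s companion formula $\mathrm{Mar}_{{\bf u}}(g) = B({\bf u}(g), {\bf X}_g^0)$ (valid since $B$ is $\mathrm{SO}^o(2,1)$-invariant, one may move the group elements onto ${\bf X}_\sigma^0$). Here is where the geometry enters: each prefix $w_k$ carries the lift of the neutral vector ${\bf X}_h^0$ to a copy of $h$'s axis at a specific intersection point of the geodesic representative of $\sigma$ with $h$ on the surface; by Lemma \ref{angle lemma}, the pairing $B(w_k {\bf X}_h^0, {\bf X}_\sigma^0)$ computes $\cos$ of the angle between the $h$-axis and the $\sigma$-axis at that intersection, i.e. $\cos(\theta_h^\sigma)_{p_j}$ or $\cos(\theta_h^\sigma)_{q_j}$ depending on orientation. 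The two telescoping endpoints (the $+{\bf X}_h^0$ terms versus the $-\sigma_k^2{\bf X}_h^0$ terms) are precisely what separate the contributions at the points $p_j$ (tangent along $h$ pointing into $S_2$) from those at $q_j$ (the opposite crossings), which accounts for the two sums in the statement.

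The main obstacle will be the bookkeeping that translates the algebraic telescoping sum into the correct geometric labeling of intersection points with the correct signs/orientations — in particular, verifying that $B({\bf X}_h^0, {\bf X}_\sigma^0)$ and its conjugated versions pick out $\cos$ of the angle in the chosen convention (the ``seen in the left-hand direction'' convention of $\S\ref{sec setting}$), and that there is no spurious sign when $\sigma_k^2 {\bf X}_h^0$ versus ${\bf X}_h^0$ sit on opposite sides of the crossing. I would handle this by carefully tracking, for each syllable $\sigma_k^\xi$, which geodesic arc of $\sigma$ on the surface it represents and at which of the two types of crossing with $h$ it begins and ends; the intersection points $p_j, q_j$ then correspond bijectively to the transitions $\Sigma_1 \to \Sigma_2$ and $\Sigma_2 \to \Sigma_1$ in the cyclic word for $\sigma$. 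Once the labeling is fixed, applying Lemma \ref{angle lemma} termwise finishes the computation; the case $\sigma' = \sigma_1^1\cdots\sigma_n^1$ is identical except that the final $\Sigma_2$-syllable is absent, removing one crossing of each type as needed.
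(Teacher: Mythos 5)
Your proposal takes essentially the same route as the paper's proof: expand ${\bf AT}_h(\sigma)$ by the cocycle condition so that only the $\Sigma_2$-syllables contribute terms $w_k({\bf X}_h^0 - \sigma_k^2 {\bf X}_h^0)$, pair each term with ${\bf X}_\sigma^0$ using the ${\rm SO}^o(2,1)$-invariance of $B$ and the equivariance $\phi{\bf X}_g^0={\bf X}^0_{\phi g \phi^{-1}}$, and apply Lemma \ref{angle lemma} termwise, with the $-\sigma_k^2{\bf X}_h^0$ terms giving $\cos(\pi-(\theta_h^{\sigma})_{q_j})=-\cos(\theta_h^{\sigma})_{q_j}$ so that the two families of crossings $p_j$, $q_j$ both enter with a plus sign. (The sum does not literally telescope, but since you ultimately treat each of the $2n$ terms individually, exactly as the paper does, this is immaterial.)
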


\begin{proof}
We calculate the cocycle condition of the affine twist cocycle on the case of $\sigma$.
\begin{eqnarray*}
{\bf AT}_h(\sigma) &=& \sum_{i=1}^n \sigma_1^1 \cdots \sigma_{i-1}^2 {\bf AT}_h (\sigma_i^1) + \sum_{j=1}^n \sigma_1^1 \cdots \sigma_{j}^1 {\bf AT}_h ({\sigma_j^2}) \\
&=& \sum_{j=1}^n \sigma_1^1 \cdots \sigma_{j}^1 {\bf AT}_h ({\sigma_j^2}).
\end{eqnarray*}
Next we calculate the Margulis invariant. Set ${\bf Y}^0 := {\bf Y}_h^0$.
\begin{eqnarray*}
{\rm Mar}_{{\bf AT}_h} (\sigma) &=& B({\bf X}_{\sigma}^0, \sum_{j=1}^n \sigma_1^1 \cdots \sigma_{j}^1 {\bf AT}_h ({\sigma_j^2})) \\
&=& \sum_{j=1}^n B({\bf X}_{\sigma}^0, \sigma_1^1 \cdots \sigma_{j}^1 {\bf AT}_h ({\sigma_j^2})) \\
&=& \sum_{j=1}^n B(( \sigma_1^1 \cdots \sigma_{j}^1)^{-1} {\bf X}_{\sigma}^0, {\bf AT}_h ({\sigma_j^2})). \cdots (\star)
\end{eqnarray*}
We note that $\phi {\bf X}_{\sigma}^0 = {\bf X}_{\phi \sigma \phi^{-1}}^0$ for every $\phi \in {\rm SO}^o(2,1)$. So we have
\begin{eqnarray*}
(\star) &=& \sum_{j=1}^n B( {\bf X}_{\sigma_j^2 \sigma_{j+1}^1 \sigma_{j+1}^2 \cdots \sigma_n^2 \sigma_1^1 \cdots \sigma_{j}^1}^0, {\bf AT}_h ({\sigma_j^2})).
\end{eqnarray*}
Then, by ${\bf AT}_h ({\sigma_j^2}) = {\bf Y}^0 - \sigma_j^2 {\bf Y}^0$, the Margulis invariant ${\rm Mar}_{{\bf AT}_h} (\sigma)$ is equal to:
\begin{eqnarray*}
\sum_{j=1}^n B( {\bf X}_{\sigma_j^2 \sigma_{j+1}^1 \sigma_{j+1}^2 \cdots \sigma_n^2 \sigma_1^1 \cdots \sigma_{j}^1}^0, {\bf Y}^0)  - \sum_{j=1}^n B({\bf X}_{\sigma_{j+1}^1 \sigma_{j+1}^2 \cdots \sigma_n^2 \sigma_1^1 \cdots \sigma_{j}^1\sigma_j^2}^0 , {\bf Y}^0 ).
\end{eqnarray*}

Consider the first terms $B( {\bf X}_{\sigma_j^2 \sigma_{j+1}^1 \sigma_{j+1}^2 \cdots \sigma_n^2 \sigma_1^1 \cdots \sigma_{j}^1}^0, {\bf Y}^0)$. We find these two unit vectors to satisfy Lemma \ref{angle lemma}, since, in ${\mathbb H}^2$, $\sigma_j^2 \cdots  \sigma_{j}^1$ acts onto the a side of $S_2$ from $S_1$. Therefore we can put as $\cos{(\theta_h^{\sigma})_{p_j}}$ the value of this inner product.

We check the second terms. Notice that a loop $\sigma_{j+1}^1 \cdots \sigma_j^2$ goes to $S_1$ from $S_2$. So we can denote its intersection with $h$ by $q_j$.
The angle $(\theta_h^{\sigma})_{q_j}$ at $q_j$ satisfies 
$$
B({\bf X}_{\sigma_{j+1}^1 \cdots \sigma_j^2}^0 , {\bf Y}^0 ) = \cos{(\pi - (\theta_h^{\sigma})_{q_j})} = - \cos{(\theta_h^{\sigma})_{q_j}}.
$$
So, ${\rm Mar}_{{\bf AT}_h} (\sigma) = \sum_{j=1}^n \cos{(\theta_h^{\sigma})_{p_j}} + \sum_{j=1}^n \cos{(\theta_h^{\sigma})_{q_j}}$ holds.
The proof of the other case for $\sigma'$ is same.
\end{proof}

Note that words which start from an element of $\Sigma_2$ also satisfy the equation $\eqref{wolpert}$.
Thus we obtain the equation $\eqref{wolpert}$ for any loop in $\pi_1(S)$.
\end{proof}

From Theorem $\ref{cosine formula}$ and the cosine formula by Wolpert\cite{w1981}, we can identify ${\bf AT}_h$ with the tangent vector on the Teichm\"uller space of $S_{g,b}$ corresponding to a Fenchel-Nielsen twist along the separating loop $h$. In general, Fenchel-Nielsen twists are along some geodesic loops on $S_{g,b}$, which are pairwise disjoint.

\begin{corollary}
For a linear sum of some affine twist cocycles whose associated simple closed curves are pairwise disjoint, the cosine formula in Theorem $\ref{cosine formula}$ holds if every associated curve is a separating curve.
Namely, for a cocycle $\displaystyle {\bf at} := \sum_{k=1}^s {\tau}_k {\bf AT}_{h_k}$ and any geodesic loop $\sigma$ in $S_{g,b}$,
\begin{equation}
\left. \frac{d \, \ell_{\sigma}}{dt} (t) \right|_{t=0} = 2 \sum_{k=1}^s \{ \tau_k \sum_{p_k \in h_k \cap \sigma}\cos{(\theta_{h_k}^{\sigma})_{p_k}} \}
\end{equation}
holds, where $h_1, \ldots, h_s$ are separating curves satisfying the assumption of this corollary.
\end{corollary}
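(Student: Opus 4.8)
The proof strategy is to reduce the multi-curve case to the single-curve case (Theorem \ref{cosine formula}) by exploiting the linearity of the whole construction in the affine twist parameters. First I would observe that the assignment ${\bf u} \mapsto \ell_\sigma'(0)$, the rate of change of hyperbolic length of a fixed geodesic $\sigma$ under the Goldman-Margulis infinitesimal deformation associated to a cocycle ${\bf u}$, is linear in ${\bf u}$: indeed the lemma of Goldman-Margulis quoted at the start of $\S \ref{sec deformation}$ identifies this rate with $2\,{\rm Mar}_{{\bf u}}(\sigma)$, and the Margulis invariant ${\rm Mar}_{{\bf u}}(\sigma)$ is linear in the cocycle ${\bf u}$ (it equals $B({\bf u}(\sigma), {\bf X}_\sigma^0)$ with ${\bf X}_\sigma^0$ depending only on the linear part $\sigma$, which is unchanged). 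Hence for ${\bf at} = \sum_{k=1}^s \tau_k {\bf AT}_{h_k}$ one has
\begin{equation*}
\left. \frac{d\,\ell_\sigma}{dt}(t)\right|_{t=0} = 2\,{\rm Mar}_{{\bf at}}(\sigma) = 2\sum_{k=1}^s \tau_k\, {\rm Mar}_{{\bf AT}_{h_k}}(\sigma).
\end{equation*}

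Second I would invoke Theorem \ref{cosine formula} for each index $k$ separately: since each $h_k$ is a separating curve, that theorem gives ${\rm Mar}_{{\bf AT}_{h_k}}(\sigma) = \sum_{p_k \in h_k \cap \sigma}\cos{(\theta_{h_k}^{\sigma})_{p_k}}$. Substituting into the displayed sum yields exactly the claimed formula. The only genuinely non-routine point is the implicit assumption that the affine twist cocycles ${\bf AT}_{h_1}, \ldots, {\bf AT}_{h_s}$, defined a priori on the surfaces one at a time, can be simultaneously realized as cocycles on the common surface $S_{g,b}$, so that their linear combination ${\bf at}$ makes sense as a single cocycle and the additivity of ${\rm Mar}$ applies. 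This is where the hypothesis that the $h_k$ are \emph{pairwise disjoint} is used: cutting $S_{g,b}$ along the disjoint multicurve $h_1 \cup \cdots \cup h_s$ produces a decomposition into subsurfaces, and by the combination construction of $\S \ref{sec coordinates}$ (Definitions \ref{new combination}, \ref{new affine twist} and their holed-sphere analogues from \cite{m2016}) each ${\bf AT}_{h_k}$ extends to a well-defined cocycle on $G_{g,b}$ that is a coboundary on the subgroups not meeting $h_k$; because the curves are disjoint these extensions are mutually compatible and the parameters $\tau_k$ are independent coordinates, exactly as in Theorem \ref{deformation space}.

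I expect the main obstacle to be making this compatibility precise: one must check that expanding a word representing $\sigma$ alternately through the pieces of the cut surface and applying the cocycle condition produces, for ${\bf AT}_{h_k}$, precisely the contributions localized at the intersection points $h_k \cap \sigma$ with no cross-terms between distinct $h_k$ and $h_{k'}$. This follows from the fact that the cocycle condition is additive and ${\bf AT}_{h_k}$ vanishes on each $\Sigma_\xi$ not adjacent across $h_k$ — so the computation in the Lemma above applies verbatim to each $k$, and summing over $k$ with weights $\tau_k$ finishes the argument. Since everything in sight is linear and Theorem \ref{cosine formula} already handles each summand, once the realization of ${\bf at}$ as an honest element of ${\rm Z}^1(G_{g,b}, {\mathbb R}_1^2)$ is granted the corollary is immediate.
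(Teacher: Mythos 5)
Your proposal is correct and follows essentially the same route as the paper: the paper's own proof is simply the observation that the formula follows from linearity of the Lorentzian inner product $B$ (hence of ${\rm Mar}_{{\bf u}}(\sigma) = B({\bf u}(\sigma), {\bf X}_\sigma^0)$ in the cocycle ${\bf u}$), combined with Theorem \ref{cosine formula} applied to each ${\bf AT}_{h_k}$ separately. Your additional remarks on realizing the ${\bf AT}_{h_k}$ simultaneously on $S_{g,b}$ are a sensible elaboration of what the paper leaves implicit, but they do not change the argument.
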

\begin{proof}
This corollary is proved by linearity of the Lorentzian inner product $B$.
\end{proof}
We can also identify the sum ${\bf at}$ of the affine twist cocycles with a tangent vector (on Teichm\"uller space) corresponding to the Fenchel-Nielsen twists along $h_1, \ldots h_s$.

\end{document}